\documentclass[12pt]{amsart}
\usepackage[utf8]{inputenc}
\usepackage[T1]{fontenc}
\usepackage{amsxtra,amssymb,amsthm,amsmath,amscd,mathrsfs, epsfig, eufrak}
\usepackage{amscd, amsmath, mathrsfs, amssymb, amsthm, amsxtra, bbding, epsfig, eucal, eufrak, graphicx, latexsym, mathrsfs, url, color, mathbbol, bbold}
\usepackage[all]{xy}
\usepackage{bbding}
\usepackage{float}
\usepackage{graphicx}
\usepackage{latexsym}
\usepackage{epsfig,epstopdf,color}
\usepackage{bbold}
\usepackage{bbding}
\usepackage{oldgerm}
\usepackage[french]{babel}
\usepackage{titletoc}

\pagestyle{plain}
\marginparwidth    0pt
\oddsidemargin     0pt
\evensidemargin    0pt
\topmargin         -30pt
\textheight        23cm
\textwidth         16.5cm

\setcounter{footnote}{0}

\theoremstyle{plain}
\newtheorem{Theorem}{Th\'{e}or\`{e}me}
\newtheorem*{Theorem*}{Theorem}
\newtheorem{Lemma}{Lemme}[section]
\newtheorem*{Hypothese*}{Hypoth\`{e}se}

\newtheorem{corollaire}{Corollaire}
\newtheorem*{corollaire*}{Corollaire 2*}
\newtheorem{proposition}{Proposition}

\theoremstyle{definition}

\newtheorem*{Remark}{Remarque}

\newtheorem*{remerciements}{Remerciements}

\numberwithin{equation}{section}

\usepackage{tikz}
\usepackage[normalem]{ulem}
\usepackage{soul}
\usepackage{color}
\setstcolor{red}

\def\le{\leqslant}
\def\ge{\geqslant}

\begin{document}

\title[Sur les plus grands facteurs premiers d'entiers consécutifs]
{Sur les plus grands facteurs premiers d'entiers consécutifs}
\date{\today}
\author{Zhiwei Wang \ (Nancy)}

\address{%
Institut \'Elie Cartan de Lorraine\\
Universit\'e de Lorraine\\
UMR 7502\\
54506 Van\-d\oe uvre-l\`es-Nancy\\
France
}
\email{zhiwei.wang@univ-lorraine.fr}

\thanks{L'auteur est partiellement soutenu par une bourse de ``China Scholarship Council''.}

\begin{abstract}
Let $P^+(n)$ denote the largest prime factor of the integer $n$ and $P_y^+(n)$ denote the largest prime factor $p$ of $n$ which satisfies $p\leqslant y$. In this paper, firstly we show that the triple consecutive integers with the two patterns $P^+(n-1)>P^+(n)<P^+(n+1)$ and $P^+(n-1)<P^+(n)>P^+(n+1)$ have a positive proportion respectively. More generally, with the same methods we can prove that for any
$J\in \mathbb{Z}, J\geqslant3$, the $J-$tuple consecutive integers with the two patterns
$P^+(n+j_0)= \min\limits_{0\leqslant j\leqslant J-1}P^+(n+j)$ and
$P^+(n+j_0)= \max\limits_{0\leqslant j\leqslant J-1}P^+(n+j)$ also have a positive proportion respectively. Secondly for $y=x^{\theta}$ with $0<\theta\leqslant 1$ we show that there exists a positive proportion of integers $n$ such that $P_y^+(n)<P_y^+(n+1)$.
Specially, we can prove that the proportion of integers $n$ such that $P^+(n)<P^+(n+1)$ is larger than 0.1356, which improves the previous result ``0.1063'' of the author.
\end{abstract}
\vglue -1,5mm
\maketitle
{\footnotesize
\tableofcontents

\dottedcontents{section}[1.16cm]{}{1.8em}{5pt}
\dottedcontents{subsection}[2.00cm]{}{2.7em}{5pt}
}

\section{Introduction}

Les entiers naturels ont deux structures fondamentales: additive et multiplicative.
En général, les propriétés multiplicatives d'un entier et celles de sa perturbation additive sont indépendantes.
Les nombres premiers de Fermat et les nombres premiers jumeaux sont deux exemples typiques.
Dans cet article, nous nous intéressons aux facteurs premiers des entiers consécutifs.
D\'{e}signons par $P^+(n)$ le plus grand facteur premier d'un entier g\'{e}n\'{e}rique $n\geqslant 1$ avec la convention que $P^+(1)=1$.
En tenant compte de la raison mentionnée ci dessus, il est naturel d'escompter que $P^+(n)<P^+(n+1)$ pour un entier sur deux et plus généralement:
\begin{Hypothese*}[\textbf{A}]
Soit $k\geqslant 2$ un entier fix\'{e}.
Alors pour toute permutation $(a_1, a_2, \ldots , a_k)$ de $\{0, 1, \ldots, k-1\}$,  on a
$$
{\rm{Prob}}[P^+(n+a_1)<P^+(n+a_2)<\cdots <P^+(n+a_k)]=\frac{1}{k!},
$$
c'est-à-dire,
\begin{equation}\label{Conjecture}
\frac{1}{x} \sum_{\substack{n\le x\\ P^+(n+a_1)<P^+(n+a_2)<\cdots <P^+(n+a_k)}} 1
\to \frac{1}{k!}
\end{equation}
pour $x\to\infty$.
\end{Hypothese*}

Cette conjecture est formul\'{e}e par De Koninck et Doyon \cite{DeKo11} dans le cadre de leur article sur
la distance entre les entiers friables.
Sans doute, une telle conjecture est très difficile à démontrer.
M\^{e}me dans le cas le plus simple, i.e. $k=2$, cette conjecture reste encore ouverte.
Ce cas est un des ``unconventional problems in number theory'' d'Erd\H{o}s (voir par exemple \cite{Erd79} ou \cite{Ten13}).

\vskip 1mm

\subsection{Les plus grands facteurs premiers de trois entiers cons\'{e}cutifs}\

\vskip 1mm

En 1978, Erd\H{o}s et Pomerance observent  dans leur article {\cite{ErdPom78}} que les deux configurations
\begin{equation}\label{cas1}
P^+(n-1)>P^+(n)<P^+(n+1)
\end{equation}
ou
\begin{equation}\label{cas2}
P^+(n-1)<P^+(n)>P^+(n+1)
\end{equation}
ont lieu pour une infinit\'{e} d'entiers $n$, et conjecturent qu'elles se produisent pour une proportion positive d'entiers. Par ailleurs, ils d\'{e}montrent l'existence d'une infinit\'{e} d'entiers $n$ satisfaisant
\begin{equation}\label{cas3}
P^+(n-1)<P^+(n)<P^+(n+1)
\end{equation}
en considérant des entiers $n$ de la forme $n=p^{k_0}$ avec $k_0$ judicieusement choisi.
Finalement, ils remarquent que \og On the other hand we cannot
find infinitely many $n$ for which
\begin{equation}\label{cas4}
P^+(n-1)>P^+(n)>P^+(n+1),
\end{equation}
but perhaps we overlook a simple proof.\fg\;
En 2001, Balog \cite{Bal01} d\'{e}montre leur conjecture et il obtient
\begin{align*}
\big|\big\{n\leqslant x : P^+(n-1)>P^+(n)>P^+(n+1)\big\}\big|\gg x^{1/2}
\end{align*}
pour $x\rightarrow \infty$.

Dans cet article, nous montrons qu'il existe une proportion positive d'entiers $n$ tels que \eqref{cas1} et \eqref{cas2} sont vraies.

Notre résultat est le suivant.

\begin{Theorem}\label{thm1}
Pour $x\rightarrow \infty$, on a
\begin{align}\label{eq:thm1_A}
\big|\big\{n\leqslant x:\, P^+(n-1)>P^+(n)<P^+(n+1)\big\}\big|> 1,063\times 10^{-7}x.
\end{align}
et
\begin{align}\label{eq:thm1_B}
\big|\big\{n\leqslant x:\, P^+(n-1)<P^+(n)>P^+(n+1)\big\}\big|> 8,84\times 10^{-4}x.
\end{align}
\end{Theorem}

L'idée de la preuve est de considérer pour \eqref{eq:thm1_A} des entiers $n$ friables et pour \eqref{eq:thm1_B} des
entiers de la forme $mp$, où $p$ est un nombre premier de taille assez grande. On introduit ensuite un système de poids
bien adapté et on utilise des théorèmes de type Bombieri-Vinogradov pour les entiers friables et pour les entiers avec un grand facteur premier.

\vskip 1mm

\subsection{Les plus grands facteurs premiers de deux entiers cons\'{e}cutifs}\

\vskip 1mm

Dans ce sous-paragraphe,
nous considérons les plus grands facteurs premiers de deux entiers consécutifs.
Dans ce cas, la conjecture \eqref{Conjecture} peut être simplifiée de la manière suivante :
\begin{equation}\label{conj:2}
|\{n\le x : P^+(n)<P^+(n+1)\}|\sim \tfrac{1}{2} x.
\end{equation}
En 1978, Erd\H{o}s et Pomerance {\cite{ErdPom78}} d\'{e}montrent qu'il existe une proportion positive d'entiers $n$ avec $P^+(n)<P^+(n+1)$. Plus pr\'{e}cis\'{e}ment, ils obtiennent
\begin{equation}\label{0.0099}
\big|\big\{n\leqslant x:\, P^+(n)<P^+(n+1)\big\}\big|> 0,0099x \qquad (x\rightarrow \infty).
\end{equation}
En 2005, La Bret\`{e}che, Pomerance et Tenenbaum {\cite{DelaPomTen05}} am\'{e}liorent la constante 0,0099 en 0,05544. De plus, dans leur article ils indiquent que 0,05544 peut être remplacée par 0,05866, gr\^{a}ce \`{a} une observation de Fouvry.
Récemment, nous \cite{Wang17} avons réussi à généraliser ce problème dans les petits intervalles.
En particulier, nous avons amélioré la constante 0,05866 en 0,1063.

Rivat \cite{Rivat01} propose une autre voie pour approcher \eqref{conj:2}. Pour $2\leqslant y\leqslant x,$
notons $P_y^+(n)$, le plus grand facteur premier de $n$ inférieur à $y$, $P_y^+(n)=\max\{p|n:\, p\leqslant y\},$
avec la convention $P_y^+(n)=1$ si le plus petit facteur premier de $n$ est strictement supérieur à $y$.

Pour quels $y$, avons-nous la formule
\begin{equation}\label{Rivia:1}
|\{n\le x : P^+_y(n)<P^+_y(n+1)\}|\sim \tfrac{1}{2} x\, ?
\end{equation}

En posant
$$
f_y(n) := \begin{cases}
1 & \text{si $\,P^+_y(n+1)>P^+_y(n)$,}
\\\noalign{\vskip 1mm}
-1 &  \text{si $\,P^+_y(n+1)<P^+_y(n)$,}
\end{cases}
$$
Rivat montre que
$$
\Big|\sum_{\substack{1\leqslant an+b\leqslant x}}f_y(an+b)\Big|
\ll_{a, b} x\exp\bigg(-\frac{\log x}{10\log y}\bigg)
$$
est valable pour  $\big(a,\, b(b+1)\big)=1$ et
\begin{align}\label{y-Rivat}
x\ge 3
\qquad\text{et}\qquad
3\leqslant y\leqslant \exp\bigg(\frac{\log x}{100\log_2x}\bigg).
\end{align}
Cela implique que la formule asymptotique \eqref{Rivia:1} a lieu
uniformément dans le domaine \eqref{y-Rivat}.
Puisque \eqref{conj:2} est équivalente à \eqref{Rivia:1} avec $y=x$,
il serait intéressant d'étendre le domaine de $y$ dans \eqref{y-Rivat} ci-dessus \`{a} $y=x^{\alpha}$
o\`{u} $0<\alpha\leqslant 1$ est une constante.

Nous n'avons pas réussi à obtenir une telle extension de \eqref{y-Rivat}. Notre théorème fournit cependant une
minoration du membre de gauche de \eqref{Rivia:1} quand $y=x^{\alpha}$.

\begin{Theorem}\label{thm2}
Soit $\alpha\in ]0,\, 1].$ Il existe $C(\alpha)>0$ tel que
\begin{align}
\big|\big\{n\leqslant x:\ P^+_y(n)<P^+_y(n+1)\big\}\big|
\geqslant C(\alpha)x
\end{align}
pour $x\rightarrow \infty$.
\end{Theorem}
Notre démonstration fournit des expressions explicites de $C(\alpha)$ admissibles.
On renvoie le lecteur à \eqref{def:C(alpha)(i)} et \eqref{def:C(alpha)(ii)} pour une définition précise de
$C(\alpha)$ respectivement dans les intervalles $]0, 1/2]$ et $]1/2, 1].$ Le suivant est trois exemples de valeurs de $C(\alpha)$ admissibles:
\begin{align*}
C\Big(\frac{1}{3}\Big)>0,0506,\qquad C\Big(\frac{1}{2}\Big)>0,0914,\qquad C\Big(\frac{2}{3}\Big)>0,0948.
\end{align*}
Lorsque $\alpha$ tend vers $0$, notre résultat devient moins intéressant, en effet,
$$\lim\limits_{\alpha\rightarrow 0}C(\alpha)\rightarrow 0,$$
ce qui est contraire à l'intuition. Cela est d\^{u} au système de poids que nous utilisons qui est certainement très
perfectible pour des petites valeurs de $\alpha.$

Dans cet article nous souhaitons donner un premier résultat valable pour tout $\alpha\in ]0, 1].$ Nous espérons dans
un prochain travail améliorer les valeurs de $C(\alpha)$ pour $\alpha$ proche de $0.$

Pour $\alpha=1$, le Théorème \ref{thm2} fournit une amélioration de la proportion $0,1063$
obtenue dans \cite{Wang17}. Nous montrons ainsi que $P^+(n)<P^+(n+1)$ (ou $P^+(n)>P^+(n+1)$)
a lieu pour au moins 2 entiers sur 15, plus précisément on a le corollaire suivant.

\begin{corollaire}\label{cor1}
Pour $x\rightarrow \infty$, on a
\begin{align}\label{eq:cor1}
\big|\big\{n\leqslant x:\, P^+(n)<P^+(n+1)\big\}\big|> 0,1356x.
\end{align}
\end{corollaire}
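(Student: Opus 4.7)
La démonstration se réduit à appliquer le Théorème \ref{thm2} au cas $\alpha=1$ et à vérifier numériquement que la constante $C(1)$ obtenue par la formule explicite \eqref{def:C(alpha)(ii)} est strictement supérieure à $0{,}1356.$ D'abord, il convient de remarquer que pour $\alpha=1$ on a $y=x$, de sorte que tout entier $n\leqslant x-1$ vérifie $P^+_y(n)=P^+(n)$ et $P^+_y(n+1)=P^+(n+1).$ Ainsi les deux ensembles $\{n\leqslant x:\ P^+_y(n)<P^+_y(n+1)\}$ et $\{n\leqslant x:\ P^+(n)<P^+(n+1)\}$ ne diffèrent que d'au plus un élément, ce qui ramène entièrement le Corollaire \ref{cor1} au cas particulier $\alpha=1$ du Théorème \ref{thm2}, à la borne $C(1)>0{,}1356$ près.

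Le plan est donc d'exploiter la formule explicite pour $C(\alpha)$ dans $]1/2, 1]$ fournie par \eqref{def:C(alpha)(ii)}, puis de l'évaluer en $\alpha=1$ après optimisation des paramètres libres qui y interviennent (typiquement un ou plusieurs seuils intermédiaires de friabilité et les exposants intervenant dans le poids lisse utilisé). La stratégie sous-jacente du Théorème \ref{thm2} combine deux ingrédients complémentaires : la représentation de $n+1$ sous la forme $mp$ avec $p$ premier dans une fenêtre bien choisie, détectée au moyen de théorèmes de type Bombieri--Vinogradov pour les entiers avec grand facteur premier, et un système de poids sur $n$ qui améliore strictement l'approche de \cite{Wang17} et qui est à l'origine du gain de $0{,}1063$ à $0{,}1356.$

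Le principal obstacle sera de nature calculatoire. L'expression de $C(\alpha)$ fait intervenir des intégrales de la fonction de Dickman $\rho$ (et des quantités analogues issues des sommes de type Buchstab), qui ne se calculent pas sous forme close ; il faudra utiliser des valeurs tabulées ou des encadrements numériques explicitement contrôlés. La dernière étape consistera, une fois $\alpha=1$ fixé, à balayer numériquement l'espace des paramètres libres dans \eqref{def:C(alpha)(ii)} afin de maximiser la minoration obtenue, puis à vérifier rigoureusement, avec un contrôle explicite des erreurs d'évaluation, que la valeur optimisée excède bien $0{,}1356$, ce qui permettra de conclure.
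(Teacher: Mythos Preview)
Your approach is correct and essentially identical to the paper's: the proof of the Corollaire is precisely the specialisation $\alpha=1$ of the explicit bound \eqref{def:C(alpha)(ii)} from Th\'{e}or\`{e}me~\ref{thm2}(ii), followed by numerical optimisation in the two free parameters $c$ and $\delta$. Note that at $\alpha=1$ the Buchstab contribution in $\vartheta_0(\delta,1)$ vanishes (since $\omega(u)=0$ for $u<1$), so the formula $g(1;c,\delta)$ reduces to $h(c)+f(\delta)$ with $h(c)=\log\frac{1}{1-c}-2\int_0^c\log\frac{1}{1-t}\,\frac{{\rm d}t}{4/7-t}$ and $f(\delta)=\rho(1/\delta)\,\delta\log\frac{1}{2\delta}$; the paper finds $h(c)>0{,}1238$ at $c\approx 0{,}2056$ and $f(\delta)>0{,}0118$ at $\delta\approx 0{,}414$, giving the total $0{,}1356$.
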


Signalons que la constante  0,1356 peut être remplacée par 0,411 sous l'hypothèse d'Elliott-Halberstam
et l'hypothèse d'Elliott-Halberstam pour des entiers friables (avec ``$q\leqslant x^{1-\varepsilon}\,$'' à la place de ``$q\leqslant x^{1/2}/(\log x)^B\,$'' dans \eqref{eq:BV-S(x,y)} du Lemme \ref{lem:BV-S(x,y)} ci-dessous).

\vskip 1mm

\subsection{Les plus grands facteurs premiers de plusieurs entiers cons\'{e}cutifs}\

\vskip 1mm

De Koninck et Doyon \cite{DeKo11} ont remarqué que
sous l'Hypothèse (A) on a
$$
\big|\big\{n\leqslant x:\, P^+(n+j_0)= \min_{0\leqslant j\leqslant J-1}P^+(n+j)\big\}\big|
\sim J^{-1}x
$$
pour $x\to\infty$.
Notre méthode permet d'obtenir une proportion positive inconditionnelle.

\begin{Theorem}\label{thm3}
Soient $J\geqslant 3$ un entier et $j_0\in \{0, \dots, J-1\}$.
Alors on a
\begin{align}\label{eq:thm3}
\big|\big\{n\leqslant x : P^+(n+j_0)= \min_{0\leqslant j\leqslant J-1}P^+(n+j)\big\}\big|
\ge \{C_3(J)+o(1)\} x
\end{align}
pour $x\rightarrow \infty$, o\`{u}
\begin{align}\label{C3(J)}
C_3(J)
:= \max_{0<\alpha<\frac{1}{2(J-1)}} \rho\bigg(\frac{1}{\alpha}\bigg)
\bigg(\alpha\log\frac{1}{2\alpha(J-1)}\bigg)^{J-1}>0
\end{align}
\end{Theorem}

Nous avons un résultat similaire pour le max à la place de min.

\begin{Theorem}\label{thm4}
Soient $J\geqslant 3$ un entier et $j_0\in \{0, \dots, J-1\}$.
Alors on a
\begin{align}\label{eq:thm4}
\big|\big\{n\leqslant x : P^+(n+j_0) = \max_{0\leqslant j\leqslant J-1}P^+(n+j)\big\}\big|
\ge \{C_4(J)+o(1)\} x
\end{align}
pour $x\rightarrow \infty$, o\`{u}
\begin{align}\label{C4(J)}
C_4(J)
:= \max_{\substack{\frac{2J-2}{2J-1}<\alpha<1 \\ 1-\alpha\leqslant \beta< \gamma<\frac{\alpha}{2(J-1)}}}
\bigg(\beta\log\frac{\gamma}{\beta}\bigg)^{J-1}\log\frac{1}{\alpha}>0.
\end{align}
\end{Theorem}

\subsection{Application : distance entre les entiers friables}\

\vskip 1mm

Pour mesurer la distance entre les entiers friables,
De Koninck et Doyon \cite{DeKo11} ont introduit la fonction suivante :
\begin{align}\label{delta(n)}
\delta(n) := \min_{\substack{m\in \mathbb{N}^*\setminus\{n\}\\ P^+(m)\leqslant P^+(n)}}|m-n|
\end{align}
et ont montré sous l'Hypothèse (A) la formule asymptotique
$$
\sum_{n\leqslant x} \delta(n)^{-1}\sim (4\log 2-2)x
$$
pour $x\rightarrow \infty$.
De plus, le Th\'{e}or\`{e}me 10 de \cite{DeKo11} entra\^{i}ne la minoration inconditionnelle
\begin{align}\label{mino-delta(n)}
\sum_{n\leqslant x} \delta(n)^{-1} >\tfrac{2}{3}x+o(x)
\end{align}
pour $x\rightarrow \infty$.

Nous pouvons obtenir gr\^{a}ce au Théorème \ref{thm1}, une majoration inconditionnelle.

\begin{corollaire}\label{cor2}
Pour $x\rightarrow \infty$, on a
\begin{align}\label{eq:cor2}
\sum_{n\leqslant x} \delta(n)^{-1} <(1-5,315\times 10^{-8})x .
\end{align}
\end{corollaire}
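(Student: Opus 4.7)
L'approche sera directe et reposera sur un lien élémentaire entre la fonction $\delta$ et la configuration \eqref{cas1}. La première observation sera que, par la définition \eqref{delta(n)}, l'assertion $\delta(n)\geq 2$ équivaut exactement à ce qu'aucun des deux voisins $n-1$ et $n+1$ ne vérifie $P^+(\cdot)\leq P^+(n)$, c'est-à-dire à la condition $P^+(n-1)>P^+(n)<P^+(n+1)$ déjà étudiée dans \eqref{eq:thm1_A}. Les entiers pour lesquels $\delta(n)=1$ seront donc exactement ceux dont au moins un des deux voisins possède un plus grand facteur premier inférieur ou égal à $P^+(n)$.

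L'étape suivante consistera à découper la somme selon la valeur de $\delta(n)$ et à utiliser les bornes triviales $\delta(n)^{-1}\leq 1$ pour $\delta(n)=1$ et $\delta(n)^{-1}\leq \tfrac{1}{2}$ pour $\delta(n)\geq 2$. On obtiendra ainsi
\begin{align*}
\sum_{n\leq x}\delta(n)^{-1}
&\leq \big|\{n\leq x:\delta(n)=1\}\big| + \tfrac{1}{2}\big|\{n\leq x:\delta(n)\geq 2\}\big|\\
&= x - \tfrac{1}{2}\big|\{n\leq x:P^+(n-1)>P^+(n)<P^+(n+1)\}\big|+O(1).
\end{align*}
Il ne restera alors plus qu'à injecter la minoration \eqref{eq:thm1_A} du Théorème \ref{thm1} pour conclure que
\[
\sum_{n\leq x}\delta(n)^{-1}
< x - \tfrac{1}{2}\cdot 1{,}063\times 10^{-7}\,x
= \big(1-5{,}315\times 10^{-8}\big)x.
\]

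À vrai dire, il n'y aura pas de difficulté nouvelle à franchir ici : tout le travail analytique sera déjà concentré dans la preuve du Théorème \ref{thm1}, qui fournit la proportion positive d'entiers $n$ vérifiant \eqref{cas1}. Signalons que toute amélioration future de la constante $1{,}063\times 10^{-7}$ dans \eqref{eq:thm1_A} se transmettra immédiatement, via le facteur $\tfrac{1}{2}$ issu de la borne $\delta(n)^{-1}\leq 1/2$, à la constante $5{,}315\times 10^{-8}$ du Corollaire \ref{cor2}.
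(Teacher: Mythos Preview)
Your proof is correct and follows essentially the same route as the paper's own argument: both exploit the implication (in fact equivalence, as you note) that $P^+(n-1)>P^+(n)<P^+(n+1)$ forces $\delta(n)\geqslant 2$, split the sum accordingly, and inject \eqref{eq:thm1_A}. The paper additionally remarks that a refinement via Théorème~\ref{thm3} (decomposing according to the depth of the ``creux'') yields a slightly better constant, but this is presented as a side observation rather than the main proof.
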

On remarque qu'on peut très légèrement améliorer ce résultat (voir \eqref{cor2-thm3})
en appliquant le Théorème \ref{thm3} à la place du Théorème \ref{thm1}.

Par analogie à $\delta(n)$, nous proposons étudier la zone de $P(n)-$friabilité autour de $n$
\begin{align*}
\delta_*(n) := \min_{\substack{m\in \mathbb{N}^*\setminus\{n\}\\ P^+(m)\geqslant P^+(n)}}|n-m|.
\end{align*}
En adaptant la d\'{e}monstration du Th\'{e}or\`{e}me 10 de \cite{DeKo11}, on peut facilement montrer que pour $x\rightarrow \infty$
\begin{align}\label{mino-delta*(n)}
\sum_{n\leqslant x} \delta_*(n)^{-1}> \big\{\tfrac{2}{3}+o(1)\big\}x.
\end{align}

De fa\c{c}on similaire à la démonstration du Corollaire \ref{cor2}, on peut obtenir une majoration pour $\delta_*(n)$
en utilisant l'inégalité \eqref{eq:thm1_B} du Théorème \ref{thm1}.
\begin{corollaire*}
Pour $x\rightarrow \infty$, on a
\begin{align*}
\sum_{n\leqslant x} \delta_*(n)^{-1} <(1-4,42\times 10^{-4})x .
\end{align*}
\end{corollaire*}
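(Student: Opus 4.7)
Mon plan est de transposer presque mot pour mot la preuve du Corollaire \ref{cor2}, en substituant à l'inégalité \eqref{eq:thm1_A} du Théorème \ref{thm1} l'inégalité \eqref{eq:thm1_B}, et en remplaçant le rôle des minimums locaux stricts par celui des maximums locaux stricts.

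L'observation combinatoire clef est la suivante : $\delta_*(n)\geqslant 2$ si et seulement si aucun des deux voisins $n-1$ et $n+1$ ne réalise $P^+\geqslant P^+(n)$, c'est-à-dire si et seulement si
$$P^+(n-1)<P^+(n) \qquad \text{et} \qquad P^+(n+1)<P^+(n),$$
autrement dit si et seulement si $n$ est un maximum local strict de la fonction $P^+$. Puisque par ailleurs $\delta_*(n)\geqslant 1$ pour tout $n\geqslant 1$, on dispose des deux bornes $\delta_*(n)^{-1}\leqslant \tfrac{1}{2}$ lorsque $n$ est un tel maximum local strict et $\delta_*(n)^{-1}\leqslant 1$ dans tous les cas.

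Notons $\mathcal{M}(x):=\{n\leqslant x:\, P^+(n-1)<P^+(n)>P^+(n+1)\}.$ L'inégalité \eqref{eq:thm1_B} du Théorème \ref{thm1} fournit, pour $x\to \infty$, la minoration $|\mathcal{M}(x)|> 8{,}84\times 10^{-4}\,x.$ En scindant la somme selon $n\in\mathcal{M}(x)$ ou $n\notin\mathcal{M}(x)$ et en appliquant les deux bornes précédentes, on obtient
$$\sum_{n\leqslant x}\delta_*(n)^{-1}\leqslant \tfrac{1}{2}|\mathcal{M}(x)|+\big(x-|\mathcal{M}(x)|\big)=x-\tfrac{1}{2}|\mathcal{M}(x)|<\big(1-\tfrac{1}{2}\cdot 8{,}84\times 10^{-4}\big)x=\big(1-4{,}42\times 10^{-4}\big)x,$$
ce qui est la majoration annoncée.

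\textbf{Obstacles.} Aucune difficulté nouvelle n'apparaît au-delà du Théorème \ref{thm1} déjà établi : l'argument ci-dessus est purement combinatoire et ne consomme que la minoration \eqref{eq:thm1_B}. Toute amélioration de la constante $4{,}42\times 10^{-4}$ découlerait soit d'un raffinement de la constante $8{,}84\times 10^{-4}$ dans \eqref{eq:thm1_B}, soit de la prise en compte d'entiers $n$ pour lesquels $\delta_*(n)\geqslant 3$ (donc maxima locaux stricts sur $[n-2,n+2]$), ce qui reviendrait à appliquer le Théorème \ref{thm4} avec $J=5$ ; mais la constante $C_4(5)$ décroît avec $J$ et l'amélioration attendue serait minime.
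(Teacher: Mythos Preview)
Your proof is correct and follows precisely the approach the paper indicates: it is the exact analogue of the proof of Corollaire~\ref{cor2}, with the configuration $P^+(n-1)<P^+(n)>P^+(n+1)$ and the bound \eqref{eq:thm1_B} in place of $P^+(n-1)>P^+(n)<P^+(n+1)$ and \eqref{eq:thm1_A}. Your remark on possible refinements via the Th\'eor\`eme~\ref{thm4} also matches the paper's closing comment in Section~11.
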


On peut obtenir respectivement une majoration des quatre cas de figure pour les triplets d'entiers cons\'{e}cutifs en utilisant
les minorations \eqref{mino-delta(n)} et \eqref{mino-delta*(n)}, ou en combinant le Théorème \ref{thm1} avec le Corollaire \ref{cor1}.

\begin{corollaire}\label{cor3}
Pour $x\rightarrow \infty$, on a
\begin{align*}
&\big|\big\{n\leqslant x:\, P^+(n-1)>P^+(n)<P^+(n+1)\big\}\big|< \tfrac{2}{3}x,
\\\noalign{\vskip 1mm}
&\big|\big\{n\leqslant x:\, P^+(n-1)<P^+(n)>P^+(n+1)\big\}\big|< \tfrac{2}{3} x,
\\\noalign{\vskip 1mm}
&\big|\big\{n\leqslant x:\, P^+(n-1)<P^+(n)<P^+(n+1)\big\}\big|< (0.8644-8.84\times 10^{-4})x,
\\\noalign{\vskip 1mm}
&\big|\big\{n\leqslant x:\, P^+(n-1)>P^+(n)>P^+(n+1)\big\}\big|< (0.8644-8.84\times 10^{-4})x.
\end{align*}
\end{corollaire}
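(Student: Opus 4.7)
On note $A_1, A_2, A_3, A_4$ les quatre ensembles figurant dans les inégalités à démontrer, restreints à $n \leq x$. L'égalité $P^+(n) = P^+(n+1) = p$ entraînerait $p \mid \gcd(n, n+1) = 1$, ce qui est absurde pour $p \geq 2$. Les ensembles $A_i$ forment donc une partition exacte de $\{2, \ldots, \lfloor x \rfloor\}$, sans terme d'erreur.

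\emph{Majorations de $|A_1|$ et $|A_2|$.} On observe que $\delta(n) \geq 2$ équivaut à $n \in A_1$ : en effet, $\delta(n) = 1$ signifie qu'au moins un voisin $m \in \{n-1, n+1\}$ vérifie $P^+(m) \leq P^+(n)$, ce qui est la négation de $n \in A_1$. Il en résulte
\begin{equation*}
\sum_{n \leq x} \delta(n)^{-1} = (x - |A_1|) + \sum_{n \in A_1} \delta(n)^{-1} \leq x - \tfrac{1}{2}|A_1| + O(1).
\end{equation*}
Combinée à \eqref{mino-delta(n)}, cette majoration donne aussitôt $|A_1| < \tfrac{2}{3} x + o(x)$. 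Le même raisonnement, avec $\delta_*$ et \eqref{mino-delta*(n)} à la place de $\delta$ et \eqref{mino-delta(n)}, fournit $|A_2| < \tfrac{2}{3} x + o(x)$.

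\emph{Majorations de $|A_3|$ et $|A_4|$.} Posons $B := \{n \leq x : P^+(n) < P^+(n+1)\}$. En classant les éléments de $B$ selon le signe de $P^+(n-1) - P^+(n)$, on obtient la décomposition disjointe $B = A_3 \sqcup A_1$, et symétriquement $B^c = A_4 \sqcup A_2$ (aux effets de bord en $O(1)$ près). Par ailleurs, un argument de décalage ($n \mapsto n+1$) fournit l'identité combinatoire $\bigl||A_1| - |A_2|\bigr| \leq 1$, $A_1$ et $A_2$ comptant respectivement les \og creux\fg{} et les \og pics\fg{} de la suite $\bigl(P^+(n)\bigr)_n$. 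Le Corollaire \ref{cor1}, appliqué dans ses deux versions symétriques ($<$ et $>$), assure $|B|, |B^c| < 0{,}8644\, x + o(x)$. Combiné à la minoration \eqref{eq:thm1_B} du Théorème \ref{thm1} (appliquée directement à $|A_2|$, puis transférée à $|A_1|$ via l'identité précédente), ceci fournit les deux majorations annoncées.

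L'unique subtilité conceptuelle est l'identité combinatoire $\bigl||A_1| - |A_2|\bigr| \leq 1$ : elle permet d'utiliser uniformément la meilleure des deux minorations du Théorème \ref{thm1} et ainsi de faire apparaître le facteur $8{,}84 \times 10^{-4}$ dans les bornes de $|A_3|$ comme de $|A_4|$. Le reste n'est qu'une comptabilité élémentaire des ensembles partitionnés. La vérification que $P^+(n) \neq P^+(n+1)$ (garantissant l'absence de termes d'erreur dans les partitions) est immédiate et évite d'avoir à manipuler des ensembles de densité nulle supplémentaires.
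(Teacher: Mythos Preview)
Your proof is correct and, for the bounds on $|A_1|$ and $|A_2|$, follows the paper's argument verbatim. For $|A_3|$ and $|A_4|$ you take a small detour: you pair $A_3$ with $A_1$ via $B=A_1\sqcup A_3$, which forces you to invoke the peak--valley identity $\bigl||A_1|-|A_2|\bigr|\le 1$ in order to transfer the strong lower bound \eqref{eq:thm1_B} from $|A_2|$ to $|A_1|$. The paper instead pairs $A_3$ with $A_2$ directly, observing that $A_2\sqcup A_3=\{n\le x: P^+(n-1)<P^+(n)\}$ and $A_2\sqcup A_4=\{n\le x: P^+(n)>P^+(n+1)\}$; Corollaire~\ref{cor1} then bounds each union by $0{,}8644\,x$, and subtracting $|A_2|>8{,}84\times10^{-4}x$ yields both estimates without any auxiliary identity. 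Your peak--valley observation is a nice structural fact (and would even let one deduce $|A_1|>8{,}84\times10^{-4}x$, improving \eqref{eq:thm1_A}), but it is not needed for the corollary as stated.
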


Dans le paragraphe 2 de cet article, on rappelle la majoration du crible linéaire obtenue par Iwaniec et des résultats
de Hildebrand, Fouvry et Tenenbaum sur les entiers friables ainsi que sur les entiers sans facteur premier dans un intervalle donné.

Le paragraphe 3 porte sur divers théorèmes de type Bombieri-Vinogradov. Nous obtenons notamment pour la suite des entiers
avec un grand facteur premier un niveau de distribution en $x^{4/7-\varepsilon}$ lorsque la moyenne est prise avec un poids bien factorisable. C'est une `` légère ''\\ généralisation du théorème de  Bombieri-Friedlander-Iwaniec qui pourra peut-\^{e}tre servir dans d'autres contextes.

Les paragraphes suivants sont dévolus aux preuves des différents résultats annoncés dans cette introduction.

\begin{Remark}
Peu après la présentation des ces travaux en mai 2017 lors de la
conférence \og Prime Numbers and Automatic Sequences \fg\ au CIRM à Marseille,
Joni Ter\"{a}v\"{a}inen \cite{Ter18} m'a annoncé qu'il avait une autre preuve de
la densité inférieure strictement positive des ensembles étudiés
au Théorème 1. Sa démonstration ne fournit pas de minoration explicite
des densités inférieures mais présente une approche différente
et intéressante sur ce problème.

\end{Remark}

\begin{remerciements}
Ce travail a \'{e}t\'{e} r\'{e}alis\'{e} sous la direction de mes directeurs de th\`{e}se C\'{e}cile Dartyge et Jie Wu. Je les remercie vivement pour les nombreuses suggestions cruciales qu'ils ont propos\'{e}es dans l'\'{e}laboration de ce travail.
\end{remerciements}

\vskip 8mm

\section{Deux lemmes de cribles}

\subsection{Borne supérieure du crible linéaire}\

\vskip 1mm
Dans ce paragraphe nous rappelons un résultat d'Iwaniec sur le crible linéaire. Nous énon\c{c}ons ici seulement la majoration car seule celle-ci sera utilisée dans cet article.

Soient $\mathcal{A}$ une suite finie d'entiers, $\mathcal{P}$ un ensemble de nombres premiers, $z\geqslant 2$ un nombre r\'{e}el, $d$ un entier sans facteur carr\'{e} dont les facteurs premiers appartiennent \`{a} $\mathcal{P}$. Notons
$$
\mathcal{A}_d := \big\{a\in \mathcal{A} \,:\, d\mid a \big\},
\qquad
P_{\mathcal{P}}(z) := \prod_{p<z,\, p\in \mathcal{P}}p.
$$
On souhaite \'{e}valuer
$$
S(\mathcal{A}; \mathcal{P}, z)
:= |\{a\in \mathcal{A} : (a,\, P_{\mathcal{P}}(z))=1\}|.
$$
On suppose que $|\mathcal{A}_d|$ vérifie une formule de la forme
$$
|\mathcal{A}_d|=\frac{w(d)}{d}X + r(\mathcal{A}, d) \quad  \textmd{pour} \; d\mid P_{\mathcal{P}}(z),
$$
o\`{u} $X$ est une approximation de $|\mathcal{A}|$ ind\'{e}pendante de $d$, $w$ une fonction multiplicative v\'{e}rifiant $0<w(p)<p$ pour $p\in \mathcal{P}$, $w(d)d^{-1}X$ un terme principal et
$r(\mathcal{A}, d)$ un terme d'erreur que l'on esp\`{e}re petit en moyenne sur $d$. De plus, on d\'{e}finit
$$
V(z) := \prod_{p<z, \, p\in \mathcal{P}} \bigg(1-\frac{w(p)}{p}\bigg).
$$

On a ainsi \cite{Iwa80b}

\begin{Lemma}\label{lem:sieve}
On suppose qu'il existe une constante $K\geqslant2$ telle que
$$
\prod_{u\leqslant p<v} \bigg(1-\frac{w(p)}{p}\bigg)^{-1}
\le \frac{\log v}{\log u} \bigg(1+\frac{K}{\log u}\bigg)
$$
pour tout $v>u\geqslant 2$.
Alors pour tout $\varepsilon>0$ et $D^{1/2}\geqslant z\geqslant 2$, on a
$$
S(\mathcal{A}; \mathcal{P}, z)
\leqslant XV(z)\bigg\{F\bigg(\frac{\log D}{\log z}\bigg)+ E\bigg\}+
\sum_{\ell<\exp(8/\varepsilon^3)}\, \sum_{d\mid P_{\mathcal{P}}(z)}\lambda_\ell^+(d) r(\mathcal{A}, d),
$$
o\`{u} $F(s) =  2\mathrm{e}^{\gamma}s^{-1} \; (0<s\leqslant 3)$,
$\gamma$ est une constante d'Euler,
$\lambda_\ell^+(d)$ d\'{e}signe un coefficient
bien factorisable de niveau $D$ et d'ordre $1$.
Le terme d'erreur $E$ satisfait
$$
E=O\big(\varepsilon+\varepsilon^{-8} {\rm e}^K(\log D)^{-1/3}\big).
$$
\end{Lemma}
Les $\lambda_\ell^+(d)$ sont les poids de Rosser-Iwaniec. On pourra trouver une définition précise dans \cite{Iwa80b}.
Ici nous indiquons simplement  que $|\lambda_\ell^+(d)|\leqslant 1.$ La notation de fonction bien factorisable est définie
au début du paragraphe 3.2.

\vskip 1mm

\subsection{Entiers sans facteur premier dans un intervalle donné}\

\vskip 1mm

Soit
\begin{equation}\label{def:Pyz}
P(y, z) :=  \prod_{z<p\le y} p.
\end{equation}
On d\'{e}signe pour $z<y\leqslant x$
\begin{align}\label{def:S(x;y,z)}
S(x;\, y, z) := \{n\leqslant x : (n,\, P(y, z))=1\}
\end{align}
l'ensemble des entiers sans facteur premier dans l'intervalle $(z,\, y]$ et n'exc\'{e}dant pas $x$. On note le cardinal
$$
\Psi_0(x;\, y, z):=\big|S(x;\, y,z)\big|.
$$
Alors $\Psi_0(x;\, y, z)$ est \'{e}valu\'{e}e par le lemme suivant (voir \cite[Exercice 299]{Ten08} ou \cite{TenWu14} pour la correction).

\begin{Lemma}\label{lem:S(x;y,z)}
On a
$$
\Psi_0(x;\, y, z) = \vartheta_0(\lambda, u) x \{1+O(1/\log z)\}
$$
uniform\'{e}ment pour $y\geqslant z\geqslant 2$ et $x\geqslant yz$,
o\`{u}
$$
u:=\frac{\log x}{\log y},\qquad \lambda:=\frac{\log z}{\log y}
$$
et
\begin{equation}\label{def:vartheta0}
\vartheta_0(\lambda, u):=\rho(u/\lambda)+\int_{0}^{u}\rho(t/\lambda)\omega(u-t){\rm{d}}t
\end{equation}
avec la convention $\vartheta_0(0, u)=0$.
La fonction de Buchstab $\omega(u)$ est d\'{e}finie comme la solution continue du syst\`{e}me
$$
\begin{cases}
u\omega(u) = 1 & \ {\rm{si}} \ \ 1\leqslant u\leqslant 2,
\\
(u\omega(u))' = \omega(u-1) & \ {\rm{si}} \ \ u>2.
\end{cases}
$$
De plus, nous prolongeons $\omega(u)$ par $0$ pour $u<1$. La fonction de Dickman $\rho(u)$ est d\'{e}finie par l'unique solution continue de l'\'{e}quation diff\'{e}rentielle aux diff\'{e}rences
\begin{equation}\label{def:dickman}
\begin{cases}
\rho(u) = 1 & \ {\rm{si}} \ \ 0\leqslant u\leqslant 1,
\\
u\rho'(u) = -\rho(u-1) & \ {\rm{si}} \ \ u>1.
\end{cases}
\end{equation}
\end{Lemma}

\subsection{Entiers friables}\

\vskip 1mm

Posons
\begin{equation}\label{def:Sxy}
S(x, y) := \{n\leqslant x : P^+(n)\leqslant y\},
\qquad
\Psi(x,  y) := |S(x, y)|
\end{equation}
et
\begin{equation}\label{def:Psixyaq}
\Psi(x, y;\, a,  q):= \sum_{\substack{n\in S(x,\, y)\\ n\equiv a (\text{mod}\, q)}} 1,
\qquad
\Psi_q(x,  y):=\sum_{\substack{n\in S(x,\, y)\\ (n,\, q)=1}} 1.
\end{equation}

Les deux lemmes respectivement  suivants,
dus à Hildebrand \cite[Theorem 1]{Hil86} et à Fouvry-Tenenbaum \cite[Théorème 1]{FouTen91},
serviront dans la démonstration du Théorème \ref{thm1}.

\begin{Lemma}\label{lem2.3}
Soit $\varepsilon>0$.
Alors on a
$$
\Psi(x, y) = x \rho(u) \bigg\{1+O_{\varepsilon}\bigg(\frac{\log(u+1)}{\log y}\bigg)\bigg\}
$$
uniformément pour
$$
x\ge x_0(\varepsilon),
\qquad
\exp\{(\log_2x)^{5/3+\varepsilon}\}\le y\le x,
\leqno(H_{\varepsilon})
$$
o\`{u} $u=\log x/ \log y$ et $\rho(u)$ est d\'{e}finie par \eqref{def:dickman}.
\end{Lemma}

\vskip 1mm

\begin{Lemma}\label{lem2.4}
Soit $\varepsilon>0$.
Alors on a
$$
\Psi_q(x, y) = \frac{\varphi(q)}{q} \Psi(x, y) \bigg\{1+O\bigg(\frac{\log_2(qy)\log_2x}{\log y}\bigg)\bigg\}
$$
uniformément pour
$$
x\ge x_0(\varepsilon),
\qquad
\exp\{(\log_2x)^{5/3+\varepsilon}\}\le y\le x
\leqno(H_{\varepsilon})
$$
et
$$
\log_2(q+2)
\le \bigg(\frac{\log y}{\log(u+1)}\bigg)^{1-\varepsilon}.
\leqno(Q_{\varepsilon})
$$
\end{Lemma}

\vskip 8mm

\section{Deux théorèmes de type Bombieri-Vinogradov}

Dans cette section, nous démontrons deux théorèmes de type Bombieri-Vinogradov
que l'on utilisera dans la démonstration du Théorème \ref{thm2}.

\subsection{Théorème de type Bombieri-Vinogradov pour $S(x;\,y,z)$}\

\vskip 1mm

Soit $S(x;\, y, z)$ l'ensemble défini comme dans \eqref{def:S(x;y,z)}.
Notre théorème de type Bombieri-Vinogradov pour $S(x;\,y,z)$ est le suivant.

\begin{proposition}\label{prop:S(x;y,z)}
Pour tout $A>0$ et tout $\varepsilon>0$, il existe une constante $B=B(A)>0$ telle que l'on ait
$$
\sum_{q\leqslant x^{1/2}/(\log x)^B} \max_{t\leqslant x} \max_{(a,\, q)=1}
\bigg|\sum_{\substack{n\in S(t;\, y, z)\\n\equiv a ({\rm mod}\, q) }} 1
- \frac{1}{\varphi(q)}\sum_{\substack{n\in S(t;\, y, z)\\(n,\,q)=1}}1\bigg|
\ll_{A, \varepsilon} \frac{x}{(\log x)^A}
$$
uniformement pour
\begin{equation}\label{dom:xyz}
2\leqslant z \leqslant y \leqslant x
\qquad\text{et}\qquad
\exp\{(\log x)^{2/5+\varepsilon}\}\leqslant y\leqslant x,
\end{equation}
où $\varphi(q)$ est la fonction d'Euler.
\end{proposition}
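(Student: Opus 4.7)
\medskip\noindent\textbf{Plan of proof.} Every $n\in S(x;y,z)$ factors uniquely as $n=ab$ with $P^+(a)\le z$ and $P^-(b)>y$ (conventions $a=1$ or $b=1$ admitted), so that for $(c,q)=1$
\begin{equation*}
\sum_{\substack{n\in S(t;y,z)\\ n\equiv c\,({\rm mod}\,q)}}1-\frac{1}{\varphi(q)}\sum_{\substack{n\in S(t;y,z)\\ (n,q)=1}}1 = \sum_{\substack{a\le t,\,(a,q)=1\\ P^+(a)\le z}}\Delta_y(t/a;q,c\overline{a}),
\end{equation*}
with $\Delta_y(X;q,c')$ the discrepancy of the $y$-rough integers $\le X$ in the class $c'\,({\rm mod}\,q)$. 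The strategy is to prove a Bombieri--Vinogradov type estimate for $y$-rough integers in arithmetic progression, then to average over the outer smooth variable $a$ via a hyperbola split: for $a\le\sqrt{t}$ one applies the rough BV to the inner variable $b$, and for $a>\sqrt{t}$ (hence $b<\sqrt{t}$) one swaps the roles of the two variables and invokes a companion Bombieri--Vinogradov estimate for $z$-smooth integers in arithmetic progression supplied by the Fouvry--Tenenbaum theory of friable integers (cf.\ Lemmas~\ref{lem2.3} and~\ref{lem2.4}).

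The $y$-rough BV is proved by applying Iwaniec's linear sieve (Lemma~\ref{lem:sieve}) to $\mathcal{A}_{q,c'}=\{b\le X:b\equiv c'\,({\rm mod}\,q)\}$ with sifting primes $\mathcal{P}_q=\{p\le y:p\nmid q\}$, at level $D=x^{1/2}/(\log x)^{B''}$. The natural density is $w(d)/d=1/d$ for $(d,q)=1$ and the individual remainder is $r(\mathcal{A}_{q,c'},d)=O(1)$. In the sub-range $y\le x^{1/4}$ the constraint $y+1\le D^{1/2}$ is satisfied and the sifting parameter $s:=\log D/\log y\to\infty$, so the upper and lower sieve bounds coincide asymptotically with common main term $(X/q)V_q(y)$, where $V_q(y)=\prod_{p\le y,\,p\nmid q}(1-1/p)$; this matches the averaged count $(1/\varphi(q))\#\{b\le X:P^-(b)>y,\,(b,q)=1\}$, and the bilinear sieve remainder contributes $O(QD)=O(x/(\log x)^{B'+B''})$ after summation over $q\le Q=x^{1/2}/(\log x)^{B'}$. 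In the complementary sub-range $y>x^{1/4}$, each $y$-rough integer $\le X$ has at most $\lfloor(\log X)/(\log y)\rfloor$ prime factors; one then reduces the problem to the classical Bombieri--Vinogradov theorem for primes applied to each admissible factorisation shape.

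\medskip\noindent\textbf{Main obstacle.} The crucial technical point is to secure all BV ingredients uniformly throughout $y\ge\exp\{(\log x)^{2/5+\varepsilon}\}$ and $2\le z\le y$, and in particular across the transition near $y\sim x^{1/4}$ where the sieve approach gives way to direct prime arguments, and across the hyperbola split where the rough and smooth BV estimates must match up. The lower bound $(\log x)^{2/5+\varepsilon}$ on $\log y$ is precisely what is needed to make the sifting parameter $s=\log D/\log y$ tend to infinity (thereby placing the linear sieve in its asymptotic regime with $F(s)-f(s)$ smaller than any fixed power of $\log x$), while simultaneously ensuring that the Fouvry--Tenenbaum friable-integer asymptotics of Lemmas~\ref{lem2.3} and~\ref{lem2.4} are available with errors beating $(\log x)^{-A}$ for every fixed $A$.
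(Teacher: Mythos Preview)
Your convolution identity—writing the indicator of $S(x;y,z)$ as $v_z*u_y$ with $v_z$ the $z$-smooth indicator and $u_y$ the $y$-rough indicator—is exactly the starting point of the paper's proof. The divergence is in how the two Bombieri--Vinogradov inputs are combined. The paper does \emph{not} use a hyperbola argument; it invokes Motohashi's theorem (Lemma~\ref{lem:Motohashi}), which says that if $f$ and $g$ each satisfy a divisor bound $(\mathscr{A})$, a Siegel--Walfisz bound $(\mathscr{B})$, and a Bombieri--Vinogradov bound $(\mathscr{C})$, then so does $f*g$. The rough and smooth BV estimates you invoke are precisely the verification of $(\mathscr{C})$ (the paper cites Wolke and Fouvry--Tenenbaum directly, via Lemma~\ref{lem:BV-S(x,y)}, rather than re-deriving the rough case by sieve). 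Crucially, the threshold $y\ge\exp\{(\log x)^{2/5+\varepsilon}\}$ arises in the verification of $(\mathscr{B})$ for $u_y$, via a result of Xuan on character sums over $y$-rough integers—not from a sieve parameter tending to infinity as you suggest.

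Your hyperbola plan, as written, does not reach level $x^{1/2}$, and this is a genuine gap. Consider the piece $a\le\sqrt t$: for each such $a$ you need the rough BV to control $\Delta_y(t/a;q,c\bar a)$ summed over $q\le x^{1/2}/(\log x)^B$, but the rough BV for a variable of size $X=t/a$ is only available for moduli $q\le X^{1/2}/(\log X)^{B'}$. As soon as $a$ exceeds any fixed power of $\log x$, the target range $q\le x^{1/2-o(1)}$ lies outside this, and the trivial bound on the uncovered moduli is far too large. The swapped piece has the identical defect. This is the familiar fact that a direct hyperbola split in residue classes does not preserve the BV exponent: one must pass to Dirichlet characters, split the resulting bilinear character sum dyadically, and close via Cauchy--Schwarz and the large sieve. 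Motohashi's lemma packages exactly this mechanism, and the Siegel--Walfisz input $(\mathscr{B})$—which your proposal omits entirely—is indispensable there for handling the characters of small conductor.
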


Pour démontrer cette proposition, rappelons d'abord un résultat général de Motoshashi \cite{Mot76}.
Soit $f$ une fonction arithm\'{e}tique vérifiant les propri\'{e}t\'{e}s suivantes:
\begin{itemize}
\item[$(\mathscr{A})$]
$f(n)\ll \tau(n)^C$, o\`{u} $\tau(n)$ d\'{e}signe la fonction diviseur et $C$ est une constante.
\item[$(\mathscr{B})$]
Si le conducteur d'un caract\`{e}re de Dirichlet non principal $\chi$  est
$O((\log x)^D)$, alors
$$
\sum_{n\leqslant x}f(n)\chi(n)\ll x(\log x)^{-3D}
\qquad
(x\ge 2),
$$
o\`{u} $D$ est une constante positive.
\item[$(\mathscr{C})$]
Soit
$$
E_f(y;\, q, a) := \sum_{\substack{n\leqslant y\\ n\equiv a({\rm mod}\, q)}}f(n)
-\frac{1}{\varphi(q)}\sum_{\substack{n\leqslant y\\ (n, q)=1 }}f(n),
$$
alors pour tout $A>0$, il existe une constante $B=B(A)>0$ telle que
$$
\sum_{q\leqslant x^{1/2}/(\log x)^B} \max_{y\leqslant x} \max_{(a,\, q)=1}
\big|E_f(y;\, q, a)\big|
\ll \frac{x}{(\log x)^A}
\qquad
(x\ge 2).
$$
Les constantes $A, B, C, D$ ne d\'{e}pendent que la fonction $f$.
\end{itemize}

\vskip 1mm

Le résultat suivant est d\^{u} à Motohashi \cite[Theorem 1]{Mot76}.

\begin{Lemma}\label{lem:Motohashi}
Soient $f$ et $g$ deux fonctions arithm\'{e}tiques vérifiant les propri\'{e}t\'{e}s $(\mathscr{A})$, $(\mathscr{B})$ et $(\mathscr{C})$. Alors la convolution
multiplicative $f\ast g$ vérifie aussi $(\mathscr{A})$, $(\mathscr{B})$ et $(\mathscr{C})$.
\end{Lemma}

De manière analogue à $P^+(n)$, désignons par $P^-(n)$ le plus petit facteur premier d'un entier $n\geqslant1$
avec la convention $P^-(1)=\infty$.
Posons
\begin{equation}\label{def:tildeSxy}
\widetilde{S}(x, y)
:= \{n\leqslant x : P^-(n)> y\},
\qquad
\Phi(x, y) := |\widetilde{S}(x, y)|
\end{equation}
et
\begin{equation}\label{def:Phixyaq}
\Phi(x, y;\, a,  q)
:= \sum_{\substack{n\in \widetilde{S}(x,\, y)\\ n\equiv a (\text{mod}\, q)}}1,
\qquad
\Phi_q(x,  y)
:= \sum_{\substack{n\in \widetilde{S}(x,\, y)\\ (n,\, q)=1}}1.
\end{equation}

Nous utilisons des théorèmes de type Bombieri-Vinogradov pour les entiers criblés et pour les entiers friables.

\begin{Lemma}\label{lem:BV-S(x,y)}
Pour tout $A>0$, il existe une constante $B=B(A)>0$ telle que l'on ait
\begin{align}
\sum_{q\leqslant x^{1/2}/(\log x)^B}
\max_{z\leqslant x} \max_{(a,\, q)=1}
\bigg|\Psi(z, y;\, a,  q)-\frac{\Psi_q(z,  y)}{\varphi(q)}\bigg|
\ll_A \frac{x}{(\log x)^A}
\label{eq:BV-S(x,y)}
\\
\sum_{q\leqslant x^{1/2}/(\log x)^B} \max_{z\leqslant x} \max_{(a,\, q)=1}
\bigg|\Phi(z, y;\, a, q)-\frac{\Phi_q(z,  y)}{\varphi(q)}\bigg|
\ll_A \frac{x}{(\log x)^A}
\label{eq:BV-tildeS(x,y)}
\end{align}
uniformément pour $x\geqslant y\geqslant 2$.
\end{Lemma}

La formule \eqref{eq:BV-tildeS(x,y)} a été démontrée par Wolke \cite{Wol73}, qui dans le m\^{e}me article annonce une formule \'{e}quivalente pour les friables.
La formule \eqref{eq:BV-S(x,y)} a été démontrée par Fouvry-Tenenbaum \cite{FouTen91}.
On trouvra dans l'article \cite{FouTen96} de Fouvry-Tenenbaum (voir \'{e}galement les travaux r\'{e}cents de Drappeau \cite{Dra15}) une formule avec une majoration en $\Psi(x, y)(\log x)^{-A}$ à la place de $x(\log x)^{-A}$ lorsque $y>\exp\{(\log x)^{2/3+\varepsilon}\}$. Cependant, dans nos preuves, l'inégalité \eqref{eq:BV-S(x,y)} sera suffisante. En particulier, nous exploitons l'uniformit\'{e} en $``\max\limits_{z\leqslant x}"$ dans \eqref{eq:BV-S(x,y)} et qui n'appara\^{i}t pas dans \cite{FouTen96}.

\vskip 1mm
Nous somme maintenant prêts pour la preuve de la Proposition \ref{prop:S(x;y,z)}.

\begin{proof}
Soient $\lambda$ la fonction caract\'{e}ristique de l'ensemble $S(x;\, y, z)$, c'est-à-dire,
$$
\lambda(n)
:= \begin{cases}
1 & \quad  \textmd{si} \ \ n\in S(x;\, y, z),
\\
0 &  \quad  \textmd{sinon}.
\end{cases}
$$
Définissons deux fonctions arithm\'{e}tiques $v_z$ et $u_y$ par
$$
v_z(n):=\left\{
\begin{array}{ll}
    1 & \quad  \textmd{si} \ \ P^+(n)\leqslant z, \\
    0 &  \quad  \textmd{sinon},
  \end{array}
\right.
\qquad
u_y(n):=\left\{
\begin{array}{ll}
    1 & \quad  \textmd{si} \ \ P^-(n)> y, \\
    0 &  \quad  \textmd{sinon}.
  \end{array}
\right.
$$
Alors $v_z$ et $u_y$ sont multiplicatives et on a
\begin{equation}\label{Proof-Prop1:A}
\lambda=v_z\ast u_y.
\end{equation}

En fait, si $n\not\in S(x;\, y, z)$, il existe un premier $p$ tel que $p\mid n$ et $z\leqslant p <y$.
D'où
$$
v_z\ast u_y(n)
= \sum_{\substack{d_1d_2=n\\ p|d_1\ \text{ou}\ p|d_2}} v_z(d_1)u_y(d_2)
= 0 = \lambda(n).
$$
Si $n\in S(x;\, y, z)$, alors cet entier peut \^etre écrit de manière unique sous la forme
$$
n=n_1n_2,
\qquad
P^+(n_1)\leqslant z,
\qquad
P^-(n_2)> y.
$$
Ainsi
$$
v_z\ast u_y(n)
= \sum_{d_1d_2=n} v_z(d_1)u_y(d_2)
=  v_z(n_1)u_y(n_2)
= 1
= \lambda(n).
$$
Donc pour démontrer le résultat souhaité, gr\^{a}ce au Lemme \ref{lem:Motohashi},
il suffit de vérifier que les deux fonctions $v_z$ et $u_y$ possèdent les propriétés $(\mathscr{A})$, $(\mathscr{B})$ et $(\mathscr{C})$.

La première est triviale.

Soient $D>0$, $1<q\leqslant (\log x)^D$ et $c_0, c_1, c_2$ des constantes strictement  positives, alors pour tout caract\`{e}re de Dirichlet $\chi$ non
principal modulo $q$, on a, d'après le Th\'{e}or\`{e}me 4 de \cite{FouTen91}
\begin{align*}
\sum_{\substack{n\leqslant x\\ P^+(n)\leqslant z}}\chi(n)
\ll \Psi(x,\, z) \text{e}^{-c_1\sqrt{z}}
\ll x(\log x)^{-3D}
\end{align*}
sous la condition
$$
x\ge 3,
\qquad
\exp\{c_0(\log_2x)^2\}\le z\le x.
$$
De plus, si $2\le z<\exp\{c_0(\log_2x)^2\}$,
 le Théorème III.5.1 de \cite{Ten08} implique trivialement que
\begin{align*}
\sum_{\substack{n\leqslant x\\ P^+(n)\leqslant z}}\chi(n)
& \ll \Psi(x,\, z)
\ll x \text{e}^{-(\log x)/(2\log z)}
\ll x(\log x)^{-3D}.
\end{align*}
Cela montre que $v_z$ possède la propri\'{e}t\'{e} $(\mathscr{B})$ pour $2\leqslant z\leqslant x$.
De mani\`{e}re similaire, par le r\'{e}sultat de \cite[Theorem 1]{Xuan00}, $u_y$ satisfait $(\mathscr{B})$ pour
$\exp\{(\log x)^{2/5+\varepsilon}\}\leqslant y\leqslant x^{1/2}.$
Si $y>x^{1/2}$, $m$ est premier et on sait que $u_y(m)$ satisfait $(\mathscr{B})$ (voir \cite{Ten08}).
Donc $u_y$ satisfait $(\mathscr{B})$ pour
$$
\exp\{(\log x)^{2/5+\varepsilon}\}\leqslant y\leqslant x.
$$

On en d\'{e}duit ensuite que $v_z$ et $u_y$ satisfont $(\mathscr{C})$ en utilisant \eqref{eq:BV-S(x,y)} et \eqref{eq:BV-tildeS(x,y)} du Lemme \ref{lem:BV-S(x,y)}  respectivement. Finalement, \`{a} l'aide du Lemme \ref{lem:Motohashi} on d\'{e}duit que
$\lambda=v_z\ast u_y$ satisfait des propri\'{e}t\'{e}s $(\mathscr{A})$, $(\mathscr{B})$ et $(\mathscr{C})$ sous la
condition
$$
2\leqslant z\leqslant y\leqslant x,\qquad \exp\{(\log x)^{2/5+\varepsilon}\}\leqslant y\leqslant x.
$$
La Proposition \ref{prop:S(x;y,z)} est ainsi d\'{e}montr\'{e}e.
\end{proof}

\subsection{Théorème de type Bombieri-Vinogradov avec une fonction bien factorisable}\

\vskip 1mm

Pour un entier positif $k$, on d\'{e}finit $\tau_k(n)$ par
$$
\tau_k(n)=\sum_{n=n_1n_2\cdots n_k}1.
$$
Une fonction arithm\'{e}tique $\lambda(q)$ est dite de niveau $Q$ et d'ordre $k$ si
$$
\lambda(q)=0\quad(q>Q)
\qquad\text{et}\qquad
|\lambda(q)|\leqslant \tau_k(q)\quad(q\leqslant Q).
$$
On dit que $\lambda$ est bien factorisable de niveau $Q$
si pour toute d\'{e}composition $Q=Q_1Q_2$ avec $Q_1, Q_2 \geqslant 1$,
il existe deux fonctions arithm\'{e}tiques $\lambda_1, \lambda_2$ de niveaux $Q_1, Q_2$ et d'ordre $k$ telles que
$$
\lambda=\lambda_1\ast \lambda_2.
$$

Pour $q\in \mathbb{N}^*$ et $(a, q)=1$, on définit
\begin{equation}\label{pi(y;l,a,q)}
\pi(x;\, \ell, a, q) := \sum_{\substack{\ell p\leqslant x\\ \ell p\equiv a(\text{mod}\, q)}}1,
\end{equation}
pour $\ell=1$, on retrouve $\pi(x;\, a, q)$ la fonction de compte des nombres premiers dans les progressions arithmétiques.

Le théorème de Bombieri-Vinogradov assure  que $\pi(x;\, a, q)$ est proche de $\text{li}(x)/\varphi(q)$ en moyenne pour
$q\leqslant x^{1/2}/(\log x)^A$. Bombieri, Friedlander et Iwaniec montrent que la borne $q\leqslant x^{1/2}/(\log x)^A$ peut \^{e}tre
remplacée par $x^{4/7-\varepsilon},$ si on insère un poids bien factorisable. Nous présentons ici une légère généralisation de résultat de \cite{BomFriIwa86}.

\begin{proposition}\label{prop:4/7}
Soient $a\in \mathbb{Z}^*$, $A>0$ et $\varepsilon>0$.
Alors pour toute fonction bien factorisable $\lambda(q)$ de niveau $Q$, la majoration suivante
\begin{align*}
\sum_{(a,\, q)=1}\lambda(q) \sum_{\substack{L_1\leqslant \ell\leqslant L_2\\ (\ell,\, q)=1}}
\bigg(\pi(x;\, \ell, a, q) -\frac{{\rm{li}}(x/\ell)}{\varphi(q)}\bigg)
\ll_{a, A, \varepsilon} \frac{x}{(\log x)^A}
\end{align*}
ait lieu pour
$$
Q=x^{4/7-\varepsilon},\qquad 1\leqslant L_1 \leqslant L_2\leqslant x^{1-\varepsilon}.
$$
La constante implicite d\'{e}pend au plus de $a$, $A$ et $\varepsilon$.
\end{proposition}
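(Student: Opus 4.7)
Le plan consiste à adapter la méthode de dispersion de Bombieri, Friedlander et Iwaniec \cite{BomFriIwa86}, en exploitant le fait que la variable supplémentaire $\ell$ ne détruit pas, mais enrichit, la structure bilinéaire sous-jacente. On commencera par une décomposition dyadique de la somme en $\ell$ en $O(\log x)$ sous-sommes de la forme $\sum_{L\leqslant \ell<2L}$ avec $L\leqslant L_2\leqslant x^{1-\varepsilon}$. La perte polylogarithmique sera inoffensive quitte à augmenter la constante $A$, si bien qu'il suffira de majorer la somme pour un $L$ dyadique fixé.

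Pour un tel $L$, on introduira la suite
$$
\beta(n) := \sum_{\substack{\ell p = n \\ L\leqslant \ell<2L \\ p \text{ premier}}} 1,
$$
de sorte que $\sum_{L\leqslant \ell<2L,\, (\ell, q)=1}\pi(x;\, \ell, a, q) = \sum_{n\leqslant x,\, n\equiv a({\rm mod}\, q)} \beta(n)$. Le terme principal attendu $\sum_\ell \mathrm{li}(x/\ell)/\varphi(q)$ étant bien approximé par $\varphi(q)^{-1}\sum_{(n, q)=1} \beta(n)$ grâce au théorème de Siegel-Walfisz appliqué à chaque $\ell$ (avec une erreur totale absorbable dans $x/(\log x)^A$), la démonstration se ramènera à majorer
$$
\sum_q \lambda(q)\bigg(\sum_{\substack{n\leqslant x\\ n\equiv a({\rm mod}\, q)}} \beta(n) - \frac{1}{\varphi(q)}\sum_{\substack{n\leqslant x\\ (n, q)=1}} \beta(n)\bigg).
$$
La suite $\beta$ est exactement la convolution $\mathbf{1}_{[L, 2L[}\ast\mathbf{1}_{\mathcal{P}}$ et entre donc précisément dans le cadre des suites bilinéaires de type II pour lesquelles la méthode de \cite{BomFriIwa86}, appliquée au poids bien factorisable $\lambda(q)$, fournit le niveau de distribution souhaité $Q = x^{4/7-\varepsilon}$. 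La condition $L\leqslant x^{1-\varepsilon}$ garantit $p\geqslant x^{\varepsilon}$, de sorte que les deux facteurs de la convolution restent dans des plages convenables pour l'application de Cauchy-Schwarz, de la factorisation $q = q_1 q_2 q_3$ issue de $\lambda = \lambda_1\ast \lambda_2$, et des bornes de Deshouillers-Iwaniec sur les sommes de Kloosterman qui émergent.

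Le point le plus délicat sera la vérification que, pour toutes les tailles de $L$ et toutes les factorisations de $q$ autorisées par la bonne factorisabilité, les formes bilinéaires obtenues après Cauchy-Schwarz satisfont les contraintes techniques de la méthode BFI. Deux régimes se dégageront naturellement: pour $L$ petit (disons $L\leqslant x^{\eta}$ avec un $\eta>0$ petit à ajuster), on pourra simplement fixer $\ell$ et appliquer le théorème classique de BFI au module $q\leqslant x^{4/7-\varepsilon}\leqslant (x/\ell)^{4/7-\varepsilon'}$, la perte étant absorbée dans la sommation harmonique $\sum_\ell \ell^{-1}\ll \log x$; dans le régime complémentaire, on exploitera la structure bilinéaire intrinsèque de $\beta$. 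L'analyse technique devrait alors suivre essentiellement celle de \cite{BomFriIwa86}, sans nécessiter d'ingrédient véritablement nouveau.
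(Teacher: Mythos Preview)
Your overall strategy --- reduce to a BFI-type estimate and exploit the well-factorable weight --- is the same as the paper's, and your small-$L$ regime (fix $\ell$, apply \cite[Theorem~10]{BomFriIwa86} to $x/\ell$) is fine; the paper's argument implicitly contains this as a special case. The gap is in your large-$L$ regime.

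The assertion that $\beta = \mathbf{1}_{[L,2L[}\ast\mathbf{1}_{\mathcal{P}}$ ``entre précisément dans le cadre des suites bilinéaires de type II'' of BFI is too quick. A single convolution with a prime indicator is not a Type~II sum in their sense: one must first apply Heath-Brown's identity to $\Lambda$, producing $2j$ variables $m_1,\dots,m_j,n_1,\dots,n_j$, all of which carry the restriction $(\,\cdot\,, P(z))=1$ with $z=\exp(\log x/\log_2 x)$. The extra variable $\ell$ does \emph{not} carry this restriction, and this is exactly what blocks a direct appeal to \cite[Theorems~1,~2]{BomFriIwa86}: those theorems, as used in the combinatorial grouping, require the sequences to be supported on integers coprime to $P(z)$. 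Your proposal does not address this obstacle, and ``sans nécessiter d'ingrédient véritablement nouveau'' is precisely where the proof has content.

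The paper's resolution is a modified combinatorial lemma. Writing $L=x^{\nu_0}$: if $\nu_0\geqslant 3/7$, one applies \cite[Theorem~5]{BomFriIwa86} (which tolerates a smooth variable of size $\geqslant x^{3/7}$) directly with $M=L$; if $\nu_0<3/7$, one runs the BFI combinatorics on the variables $\mu_i,\nu_i$ alone (ignoring $\nu_0$), looking for a partial sum in $[1/7,3/7]$, and in the residual case one checks that $\nu_1\geqslant 3/7$ and concludes via \cite[Theorem~$5^*$]{BomFriIwa86}. Without this case analysis --- in particular the use of Theorems~5 and $5^*$ to absorb the non-coprime variable --- your outline does not establish the estimate in the range $x^{\eta}\leqslant L\leqslant x^{1-\varepsilon}$.
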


\begin{proof}
En approchant $\text{li}(x/\ell)$ par $\sum_{\ell n\leqslant x}\Lambda(n)$ puis en observant que la contribution des $n$ tels que $(n,\, qP(z))\neq 1$ est négligeable, on vérifie qu'il suffit de montrer
\begin{align}\label{Theorem3/5aim}
\sum_{(a,\, q)=1}\lambda(q) \sum_{\substack{L_1\leqslant \ell\leqslant L_2\\ (\ell,\, q)=1}}
\bigg(\sum_{\substack{\ell n\leqslant x\\\ell n\equiv a({\rm mod}\,q)\\ (n,\, P(z))=1 }} \Lambda(n)
-\frac{1}{\varphi(q)}\sum_{\substack{\ell n\leqslant x\\ (n,\, qP(z))=1}}\Lambda(n) \bigg)
\ll_{a, A, \varepsilon} \frac{x}{(\log x)^A}
\end{align}
avec $z=\exp(\log x/\log_2x)$.

Puisque la d\'{e}monstration est tr\`{e}s proche de celle de \cite[Theorem 10]{BomFriIwa86},
nous indiquons les points essentiels et les différences entre les deux démonstrations.
Pour cela, on introduit
\begin{align*}
& \Delta(L\mid M_1, \ldots, M_j\mid N_1, \ldots, N_j;\, q,\, a)
\\\noalign{\vskip 0,5mm}
& \hskip 5mm
:= \mathop{{\sum}^*}_{\substack{\ell m_1\ldots m_jn_1\ldots n_j\equiv a ({\rm mod}\,q)\\ \ell\in \mathscr{L},\, m_i\in \mathscr{M}_i ,\, n_i\in \mathscr{N}_i}}
\mu(m_1)\ldots \mu(m_j)
- \frac{1}{\varphi(q)} \mathop{{\sum}^*}_{\substack{(\ell m_1\ldots m_jn_1\ldots n_j,\, q)=1\\ \ell\in \mathscr{L},\, m_i\in \mathscr{M}_i ,\, n_i\in \mathscr{N}_i}}
\mu(m_1)\ldots \mu(m_j),
\end{align*}
où $\sum^*$ d\'{e}signe que la somme est restreinte \`{a} des entiers $m_1, \ldots, m_j, n_1, \ldots, n_j$ sans facteur premier $<z$, et $\mathscr{L}, \mathscr{M}_i, \mathscr{N}_i$ sont les intervalles suivants
$$
\mathscr{L} := [(1-\Delta)L,\, L[,
\quad
\mathscr{M}_i := [(1-\Delta)M_i,\, M_i[,
\quad
N_i = [(1-\Delta)N_i,\, N_i[
$$
avec
$$
\ell m_1\ldots m_jn_1\ldots n_j=x,\qquad
\max(M_1, \ldots, M_j)< x^{1/7}
$$
et $\Delta=(\log x)^{-A_1}$
($A_1$ est une constante assez grande d\'{e}pendant de $A$).

Dans la démonstration de \cite[Theorem 10]{BomFriIwa86},
on remplace, pour $j\leqslant J=7$,
$$
\mathscr{E}(M_1, \ldots, M_j\mid N_1, \ldots, N_j)
:= \mathop{\sum_{q_1\sim Q}\,\sum_{q_2\sim R}}_{(a, \, q_1q_2)=1}
\gamma_{q_1}\delta_{q_2}\, \Delta(M_1, \ldots, M_j\mid N_1, \ldots, N_j;\, q_1q_2,\, a)
$$
par
$$
\mathscr{E}(L\mid M_1, \ldots, M_j\mid N_1, \ldots, N_j)
:= \mathop{\sum_{q_1\sim Q}\,\sum_{q_2\sim R}}_{(a, \, q_1q_2)=1}
\gamma_{q_1}\delta_{q_2}\, \Delta(L\mid M_1, \ldots, M_j\mid N_1, \ldots, N_j;\, q_1q_2,\, a).
$$

On va montrer que
\begin{align}\label{Thmaim}
\mathscr{E}(L\mid M_1, \ldots, M_j\mid N_1, \ldots, N_j)\ll x(\log x)^{-A_2}
\end{align}
pour tout $A_2$. Notons
$$ L=x^{\nu_0},\quad M_i=x^{\mu_i},\quad N_i=x^{\nu_i}$$
avec
\begin{equation}\label{munu}
\begin{aligned}
& 0\leqslant \mu_j \leqslant \cdots \leqslant \mu_1 \leqslant \tfrac{1}{7},
\qquad
0\leqslant \nu_j \leqslant \cdots \leqslant \nu_1,
\qquad
0\leqslant \nu_0<1-\varepsilon,
\\\noalign{\vskip 1mm}
& \hskip 23mm
\mu_1+ \cdots + \mu_j + \nu_1+\cdots +\nu_j+\nu_0=1.
\end{aligned}
\end{equation}

On utilise ensuite un argument combinatoire similaire \`{a} \cite[Theorem 10]{BomFriIwa86}.
La diff\'{e}rence est qu'on ne peut pas appliquer \cite[Theorem 1]{BomFriIwa86} et
\cite[Theorem 2]{BomFriIwa86} \`{a} $L=x^{\nu_0}$,
car on n'a pas la condition $(\ell,\, P(z))=1$ pour $\ell$.
Pour surmonter cette difficulté,
nous observons tout d'abord que, si $\nu_0\geqslant \tfrac{3}{7}$,
on peut appliquer le théorème 5 de \cite{BomFriIwa86} avec
\begin{align*}
M=L=x^{\nu_0} \geqslant x^{3/7},
\qquad
\max(Q, R)\leqslant x^{2/7-2\varepsilon}
\end{align*}
pour obtenir \eqref{Thmaim}.
Sinon, c'est-à-dire, dans le cas où $\nu_0<\tfrac{3}{7}$,
on utilise le théorème 1 ou 2 de \cite{BomFriIwa86} avec
$$
R=x^{-\varepsilon}N,\qquad Q\leqslant x^{4/7-4\varepsilon}N^{-1}
$$
selon
\begin{align}\label{N11}
x^{2/7-\varepsilon} < N < x^{3/7+\varepsilon}
\end{align}
ou
\begin{align}\label{N12}
x^{1/7-\varepsilon} < N < x^{2/7+\varepsilon}.
\end{align}
Si \eqref{munu} a une somme partielle $\lambda$ de $\mu_1, \ldots, \mu_j, \nu_1, \ldots, \nu_j$ avec
\begin{align}\label{lambda1}
\tfrac{1}{7}\leqslant \lambda \leqslant \tfrac{3}{7},
\end{align}
on compl\`{e}te la preuve d'apr\`{e}s \eqref{N11} ou \eqref{N12}.
Sinon, on peut supposer qu'il n'y a pas de somme partielle de $\mu_1, \ldots, \mu_j, \nu_1, \ldots, \nu_j$ de \eqref{munu} dans \eqref{N11} ou \eqref{N12}.
Ainsi, toutes les $\mu_i$ et $\nu_i$ avec $\nu_i\leqslant \tfrac{1}{7}$ donnent un produit $\eta$ avec
$$
\eta<\tfrac{1}{7},
$$
d'o\`{u} l'on peut d\'{e}duire que
$$
\eta+\nu_0<\tfrac{4}{7}\cdot
$$
On a ainsi $\nu_1\geqslant \tfrac{3}{7}$. En utilisant \cite[Theorem $5^*$]{BomFriIwa86} avec
\begin{align*}
M=N_1=x^{\nu_1} \geqslant x^{3/7},\qquad
\max(Q, R)\leqslant x^{2/7-2\varepsilon},
\end{align*}
ce qui termine la d\'{e}monstration de la Proposition 2.
\end{proof}

\vskip 1mm

Le lemme suivant, d\^{u} à Pan-Ding-Wang \cite{PanDingWang75}, sera aussi utile dans la démonstration du
Théorème \ref{thm2}(ii).

\begin{Lemma}\label{lem:PanDingWang}
Soient $\alpha\in\,]0, 1[$ et $f(\ell)$ une fonction arithm\'{e}tique v\'{e}rifiant $|f(\ell)|\leqslant 1$.
Alors pour tout $A>0$, il existe une constante $B=B(A)>0$ telle que l'on ait
$$
\sum_{q\leqslant Q}\, \max_{(a,\, q)=1}\, \max_{y\leqslant x}\,
\bigg|\sum_{\substack{L_1<\ell\leqslant L_2\\(\ell,\, q)=1}}f(\ell)
\bigg(\pi(y;\, \ell, a, q)-\frac{{\rm{li}}(y/\ell)}{\varphi(q)}\bigg)\bigg|
\ll_{\alpha, A} \frac{x}{(\log x)^A}
$$
uniformément pour
$$
Q=x^{1/2}/(\log x)^B,\qquad (\log x)^{2B}\le L_1\leqslant L_2\le x^{\alpha},
$$
où la constante implicite ne d\'{e}pend que de $\alpha$ et $A$.
\end{Lemma}

\vskip 8mm

\section{D\'{e}monstration du Th\'{e}or\`{e}me \ref{thm1}: cas des $n$ tels que $P^+(n-1)>P^+(n)<P^+(n+1)$}

Soit $y=x^{\alpha}$ où $0<\alpha <\frac{1}{4}$ est un paramètre à choisir plus tard.
Il est clair que
\begin{equation}\label{3.1}
\sum_{\substack{n\leqslant x\\ P^+(n-1)>P^+(n)<P^+(n+1)}} 1
\geqslant \sum_{\substack{n\in S(x,\, y)\\ (n\pm 1,\ P(x, y))>1}} 1,
\end{equation}
où $S(x,\, y)$ et $P(x, y)$ sont définis respectivement par \eqref{def:Sxy} et \eqref{def:Pyz}.

Pour $n\leqslant x$ et $z<y$, on a
\begin{align}\label{def:omega}
\omega(n;\, y, z)
:= \sum_{\substack{z<p\leqslant y\\ p\mid n}} 1
\leqslant \frac{\log x}{\log z},
\end{align}
d'où
\begin{align}\label{omega-pro}
\bigg(\frac{\log x}{\log z}\bigg)^{-1} \omega(n;\, y, z)
\left\{
\begin{array}{ll}
    \leqslant 1 & \quad  \textmd{si} \ \  (n,\ P(z, y))>1,
\\\noalign{\vskip 2mm}
    =0 &  \quad  \textmd{sinon} .
  \end{array}
\right.
\end{align}
Ainsi on peut d\'{e}tecter les conditions $(n\pm 1,\, P(x, y))>1$ dans la formule \eqref{3.1}
par \eqref{omega-pro} :
\begin{equation}\label{criblepondere}
\begin{aligned}
\sum_{\substack{n\leqslant x\\ P^+(n-1)>P^+(n)<P^+(n+1)}} 1
& \geqslant \sum_{n\in S(x,\, y)} \frac{\omega(n-1;\, x, y)}{(\frac{\log x}{\log y})}
\cdot \frac{\omega(n+1;\, x, y)}{(\frac{\log x}{\log y})}
\\\noalign{\vskip 0mm}
& \geqslant  {\alpha}^2
\mathop{\sum_{y<p_1\leqslant x} \
\sum_{y<p_2\leqslant x}}_{\substack{p_1p_2\leqslant x^{1/2}/(\log x)^B\\ p_1\not=p_2}}
\sum_{\substack{n\in S(x,\, y)\\ n\equiv 1\,(\text{mod}\, p_1) \\ n\equiv -1\,(\text{mod}\, p_2)}}1
\end{aligned}
\end{equation}
o\`{u} $B$ est une constante convenable.  Pour la somme int\'{e}rieure du membre de droite,
on utilise le th\'{e}or\`{e}me des restes chinois. Il existe un entier $a$ tel que les congruences
$n\equiv 1\,(\text{mod}\, p_1),\, n\equiv -1\,(\text{mod}\, p_2)$ soient équivalents à
$n\equiv a\,(\textrm{mod}\, p_1p_2)$.
On en d\'{e}duit donc
\begin{equation}\label{thm1:S1S2}
\begin{aligned}
\sum_{\substack{n\leqslant x\\ P^+(n-1)>P^+(n)<P^+(n+1)}}1
& \geqslant {\alpha}^2
\mathop{\sum_{y<p_1\leqslant x} \
\sum_{y<p_2\leqslant x}}_{\substack{p_1p_2\leqslant x^{1/2}/(\log x)^B\\ p_1\not=p_2}}
\sum_{\substack{n\in S(x,\, y)\\ n\equiv a(\text{mod}\, p_1p_2) }}1
\\\noalign{\vskip 1mm}
& = {\alpha}^2 (\mathcal{S}_1+\mathcal{S}_2),
\end{aligned}
\end{equation}
o\`{u} (avec la notation \eqref{def:Psixyaq})
\begin{align*}
\mathcal{S}_1
& := \mathop{\sum_{y<p_1\leqslant x} \
\sum_{y<p_2\leqslant x}}_{\substack{p_1p_2\leqslant x^{1/2}/(\log x)^B\\ p_1\not=p_2}}
\frac{\Psi_{p_1p_2}(x,\, y)}{\varphi(p_1p_2)},
\\
\mathcal{S}_2
& := \mathop{\sum_{y<p_1\leqslant x} \
\sum_{y<p_2\leqslant x}}_{\substack{p_1p_2\leqslant x^{1/2}/(\log x)^B\\ p_1\not=p_2}}
\bigg(\Psi(x, y;\, a, p_1p_2)- \frac{\Psi_{p_1p_2}(x, y)}{\varphi(p_1p_2)}\bigg).
\end{align*}

Pour $\mathcal{S}_2$, en utilisant l'in\'{e}galit\'{e} de Cauchy-Schwarz on obtient
\begin{align*}
\mathcal{S}_2 \le \sum_{q\leqslant x^{1/2}/(\log x)^B} \tau(q)
\bigg|\Psi(x, y;\, a,  q)-\frac{\Psi_q(x, y)}{\varphi(q)}\bigg|
\leqslant (\mathcal{S}_{21}\mathcal{S}_{22})^{1/2},
\end{align*}
o\`{u}
\begin{align*}
\mathcal{S}_{21}
& := \sum_{q\leqslant x^{1/2}/(\log x)^B}
\bigg|\Psi(x, y;\, a,  q)-\frac{\Psi_q(x, y)}{\varphi(q)}\bigg|,
\\\noalign{\vskip 1mm}
\mathcal{S}_{22}
& := \sum_{\substack{q\leqslant x^{1/2}/(\log x)^B\\\Omega(q)\leqslant2 }} \tau(q)^2
\bigg|\Psi(x, y;\, a,  q)-\frac{\Psi_q(x, y)}{\varphi(q)}\bigg|.
\end{align*}
$\Omega(n)$ est la fonction dénombrant le nombre total des facteurs premiers de $n$ compté
avec leur ordre de multiplicité.

Pour $\mathcal{S}_{22}$ on utilise une majoration triviale :
$$
\Psi(x, y;\, a,  q) + \frac{\Psi_q(x, y)}{\varphi(q)}
\ll \frac{x}{q} + 1,
$$
et pour $\mathcal{S}_{21}$ on applique le Lemme \ref{lem:BV-S(x,y)}.
Cela donne
\begin{equation}\label{thm1:S2}
\begin{aligned}
\mathcal{S}_2
& \ll \bigg(\frac{x}{(\log x)^A}\bigg)^{1/2}
\bigg\{\sum_{q\leqslant x^{1/2}} \tau(q)^2 \,
\bigg(\frac{x}{q} + 1\bigg)\bigg\}^{1/2}
\\\noalign{\vskip 1mm}
& \ll \frac{x}{(\log x)^{A/2-3}}\cdot
\end{aligned}
\end{equation}

Dans $\mathcal{S}_1$ on peut retirer la condition $p_1\neq p_2$ au prix d'une erreur inférieur à:
$$
\sum_{y<p\leqslant x^{1/4}} \frac{\Psi_{p^2}(x,\, y)}{\varphi(p^2)}
\ll \sum_{y<p\leqslant x^{1/4}} \frac{\Psi(x,\, y)}{p^2}
\ll \frac{x^{1-\alpha}}{\log x}.
$$
On \'{e}value ensuite $\mathcal{S}_1$ à l'aide du Lemme \ref{lem2.4} sur les entiers friables.
\begin{align*}
\mathcal{S}_1
& = \mathop{\sum_{y<p_1\leqslant x} \
\sum_{y<p_2\leqslant x}}_{\substack{p_1p_2\leqslant x^{1/2}/(\log x)^B}}
\frac{\Psi_{p_1p_2}(x,\, y)}{\varphi(p_1p_2)} + o(x)
\\\noalign{\vskip 1mm}
& = x \rho\bigg(\frac{\log x}{\log y}\bigg)
\sum_{y<p_1\leqslant x^{1/2}/y(\log x)^B} \frac{1}{p_1}
\sum_{y<p_2\leqslant x^{1/2}/p_1(\log x)^B} \frac{1}{p_2} +o(x)
\\\noalign{\vskip 1mm}
& = x \rho\bigg(\frac{1}{\alpha}\bigg) \sum_{y<p_1\leqslant x^{1/2}/y(\log x)^B}
\frac{1}{p_1} \log\bigg(\frac{\frac{1}{2}\log x- \log p_1}{\log y}\bigg) +o(x),
\end{align*}
o\`{u} on a appliqu\'{e} la formule suivante
$$
\sum_{p\leqslant x}\frac{1}{p}=\log_2x+c_1+O\bigg(\frac{1}{\log x}\bigg)
$$
avec $c_1\approx 0,261 497$ la constante de Meissel-Mertens.
Par une int\'{e}gration par parties, on a
\begin{equation}\label{thm1:S1}
\begin{aligned}
\mathcal{S}_1
& = x \rho\bigg(\frac{1}{\alpha}\bigg) \int_{y}^{x^{1/2}/(y(\log x)^B)}
\log\bigg(\frac{\frac{1}{2}\log x- \log t}{\log y}\bigg)\, \text{d}\sum_{p\leqslant t}\frac{1}{p} +o(x)
\\\noalign{\vskip 1mm}
& = x \rho\bigg(\frac{1}{\alpha}\bigg) \int_{y}^{x^{1/2}/(y(\log x)^B)}
\log\bigg(\frac{\frac{1}{2}- \frac{\log t}{\log x}}{\alpha}\bigg)
\frac{\text{d} t}{t\log t} + o(x)
\\\noalign{\vskip 1mm}
& = x \rho\bigg(\frac{1}{\alpha}\bigg) \alpha
\int_1^{1/(2\alpha)-1} \frac{\log t}{\frac{1}{2}-\alpha t} \, \text{d} t + o(x).
\end{aligned}
\end{equation}

En reportant \eqref{thm1:S1} et \eqref{thm1:S2} dans \eqref{thm1:S1S2}, on d\'{e}duit que
\begin{align}\label{thm3:A}
\sum_{\substack{n\leqslant x\\ P^+(n-1)>P^+(n)<P^+(n+1)}}1
& \geqslant \bigg\{\rho\bigg(\frac{1}{\alpha}\bigg) \alpha^3
\int_1^{1/(2\alpha)-1} \frac{\log t}{\frac{1}{2}-\alpha t} \, \text{d} t  + o(1)\bigg\} x.
\end{align}
\`{A} l'aide de \emph{Mathematica 9.0}, on peut trouver la constante $1,063\times 10^{-7}$
avec $\alpha\approx \frac{1}{4,6}$.
La preuve du Th\'{e}or\`{e}me \ref{thm1} est compl\`{e}te.

\vskip 8mm

\section{D\'{e}monstration du Th\'{e}or\`{e}me \ref{thm1}: cas des $n$ tels que $P^+(n-1)<P^+(n)>P^+(n+1)$}

Soient $\alpha, \beta, \gamma$ tels que
\begin{align}\label{cond:beta1gamma1gamma2}
\frac{1}{2}<\alpha\leqslant 1,
\qquad
1-\alpha\leqslant \beta<\gamma<\frac{1}{3}
\end{align}
trois param\`{e}tres \`{a} choisir plus tard. \'{E}tant donn\'{e}s un entier $m$ et deux nombres premiers distincts
$p_1$ et $p_2$ v\'{e}rifiant
\begin{align}\label{cond:mp1p2}
1\leqslant m \leqslant x^{1-\alpha},
\qquad
x^{\beta} < p_1,\, p_2 \leqslant x^{\gamma},
\end{align}
on consid\`{e}re le syst\`{e}me d'\'{e}quations de congruences:
\begin{align}\label{p1p2}
mp-1\equiv 0\,(\textrm{mod}\, p_1),
\qquad
mp+1\equiv 0\,(\textrm{mod}\, p_2).
\end{align}
En remarque que les conditions \eqref{cond:beta1gamma1gamma2} et \eqref{cond:mp1p2} garantissent que
$(p_1p_2, m)=1$.
D'apr\`{e}s le th\'{e}or\`{e}me chinois, il existe $b<p_1p_2$ (dépendant de $m$) tel que
\begin{align}\label{b0p1p2}
p\equiv b\, (\textrm{mod}\, p_1p_2).
\end{align}
Pour ces $m$ et $p>x^{\alpha}$, on peut donc d\'{e}duire que
$$
P^+(mp-1)\leqslant \max\{p_1,\, x/p_1\} \leqslant \max\big\{x^{1/3},\, x^{1-\beta}\big\}
\leqslant x^{\alpha} < p.
$$
De même,
$$
P^+(mp+1)\leqslant \max\{p_2,\, x/p_2\}  < p.
$$
Ce qui implique
$$
P^+(mp-1)<P^+(mp)>P^+(mp+1).
$$
Ainsi, en reprenant les poids $\omega(n;\, y, z)$ définis par \eqref{def:omega}, on a d'après \eqref{omega-pro},
\begin{align*}
\sum_{\substack{n\leqslant x\\ P^+(n-1)<P^+(n)>P^+(n+1)\\ x^{\alpha}<P^+(n)\leqslant x}} 1
& \geqslant \sum_{m\leqslant x^{1-\alpha}}\, \sum_{x^{\alpha}<p\leqslant x/m}
\frac{\omega(mp-1;\, x^{\gamma}, x^{\beta})}{(\frac{\log x}{\log x^{\beta}})}\cdot
\frac{\omega(mp+1;\, x^{\gamma}, x^{\beta})}{(\frac{\log x}{\log x^{\beta}})}
\nonumber\\\noalign{\vskip -2mm}
& = \beta^2\sum_{m\leqslant x^{1-\alpha}}\, \sum_{x^{\alpha}<p\leqslant x/m}
\sum_{\substack{x^{\beta}<p_1\leqslant x^{\gamma}\\ mp-1\equiv 0\, (\text{mod}\, p_1)}}
\, \sum_{\substack{x^{\beta}<p_2\leqslant x^{\gamma}\\ mp+1\equiv 0\, (\text{mod}\, p_2)}}1
\nonumber\\\noalign{\vskip 0mm}
& = \beta^2\sum_{m \leqslant x^{1-\alpha}}\,
\mathop{\sum\ \sum}_{\substack{x^{\beta} < p_1,\, p_2 \leqslant x^{\gamma}\\ p_1\neq p_2}}
\sum_{\substack{x^{\alpha}<p\leqslant x/m \\mp-1\equiv 0\,  (\textrm{mod}\, p_1)\\ mp+1\equiv 0\,  (\textrm{mod}\, p_2) }}1.
\end{align*}
D'apr\`{e}s \eqref{p1p2} et \eqref{b0p1p2}, on a donc
\begin{equation}\label{S1S2}
\begin{aligned}
\sum_{\substack{n\leqslant x\\ P^+(n-1)<P^+(n)>P^+(n+1)\\ x^{\alpha}<P^+(n)\leqslant x}}1
& \geqslant \beta^2 \sum_{m \leqslant x^{1-\alpha}} \,
\mathop{\sum\ \sum}_{\substack{x^{\beta} < p_1,\, p_2 \leqslant x^{\gamma}\\ p_1\neq p_2}}
\sum_{\substack{x^{\alpha}<p\leqslant x/m \\p\equiv b\, (\textrm{mod}\, p_1p_2) }}1
\\\noalign{\vskip 3mm}
& = \mathbb{S}_1+\mathbb{S}_2,
\end{aligned}
\end{equation}
o\`{u}
\begin{align*}
\mathbb{S}_1
& := \beta^2\sum_{m \leqslant x^{1-\alpha}} \,
\mathop{\sum\ \sum}_{\substack{x^{\beta} < p_1,\, p_2 \leqslant x^{\gamma}\\ p_1\neq p_2}}
\frac{\pi(x/m)-\pi(x^{\alpha})}{\varphi(p_1p_2)},
\\
\mathbb{S}_2
& := \beta^2 \sum_{m \leqslant x^{1-\alpha}} \,
\mathop{\sum\ \sum}_{\substack{x^{\beta} < p_1,\, p_2 \leqslant x^{\gamma}\\ p_1\neq p_2}}
\bigg\{\bigg(\pi(x/m;\, b, p_1p_2)-\frac{\pi(x/m)}{\varphi(p_1p_2)}\bigg)
\nonumber\\\noalign{\vskip 0mm}
& \hskip 45mm
- \bigg(\pi(x^{\alpha};\, b, p_1p_2)- \frac{\pi(x^{\alpha})}{\varphi(p_1p_2)} \bigg)\bigg\}.
\end{align*}

On majore $\mathbb{S}_2$ en utilisant le th\'{e}or\`{e}me de Bombieri-Vinogradov \cite{Bom65, Vin65} qui est le cas du Lemme \ref{lem:PanDingWang} avec $l=1$.
On impose une condition sur $\gamma$ et $\alpha$:
\begin{align}\label{cond:gamma2beta1}
2\gamma < \frac{\alpha}{2}\cdot
\end{align}
On en d\'{e}duit, en combinant les conditions \eqref{cond:beta1gamma1gamma2} et \eqref{cond:gamma2beta1}
\begin{align}
\frac{4}{5}<\alpha< 1,
\qquad
1-\alpha<\beta<\gamma<\frac{\alpha}{4}\cdot
\end{align}
En procédant de manière similaire \`{a} l'estimation de $\mathcal{S}_2$ dans \eqref{thm1:S2}, c'est-à-dire,  \`{a} l'aide de l'in\'{e}galit\'{e} de Cauchy-Schwarz et le th\'{e}or\`{e}me de Bombieri-Vinogradov, on obtient pour tout $A>0$
\begin{align}\label{thm3*:S2}
\mathbb{S}_2 \ll \frac{x}{(\log x)^{A}}.
\end{align}

Puis pour $\mathbb{S}_1$, on utilise le théorème des nombres premiers:
\begin{align*}
\mathbb{S}_1
& = \beta^2 \sum_{m \leqslant x^{1-\alpha}} \bigg(\pi\Big(\frac{x}{m}\Big)-\pi\big(x^{\alpha}\big)\bigg)
\,\mathop{\sum\ \sum}_{\substack{x^{\beta} < p_1,\, p_2 \leqslant x^{\gamma}\\ p_1\neq p_2}} \frac{1}{(p_1-1)(p_2-1)}
\\
& = \beta^2 \sum_{m \leqslant x^{1-\alpha}} \frac{x+o(x)}{m\log(\frac{x}{m})}
\,\mathop{\sum\ \sum}_{x^{\beta} < p_1,\, p_2 \leqslant x^{\gamma}}\frac{1}{p_1p_2}
+ O\bigg(\frac{x^{1-\beta}}{\log x} + \sum_{m \leqslant x^{1-\alpha}}\frac{x^{\alpha}}{\log x} \bigg)
\\
& = x \bigg(\beta \log\frac{\gamma}{\beta}\bigg)^2
\int^{x^{1-\alpha}}_{1}\frac{\text{d}t}{t(\log x-\log t)}+o(x)
\\\noalign{\vskip 2mm}
& = x \bigg(\beta \log\frac{\gamma}{\beta}\bigg)^2\log\frac{1}{\alpha}+o(x).
\end{align*}
\`{A} l'aide de \emph{Mathematica 9.0}, on obtient
\begin{align}\label{f(delta0)}
\widetilde{s_1}
:= \max_{\substack{\frac{4}{5}<\alpha< 1 \\ 1-\alpha<\beta<\gamma<\alpha/4 }}
\bigg(\beta \log\frac{\gamma}{\beta}\bigg)^2\log\frac{1}{\alpha}
= \bigg(\beta' \log\frac{\gamma'}{\beta'}\bigg)^2\log\frac{1}{\alpha'}
>7\times 10^{-4}
\end{align}
avec
$$
\alpha'\approx 0,895,
\qquad
\beta'\approx 0,105,
\qquad
\gamma'\approx 0,22375.
$$
Ainsi, on obtient
\begin{align}\label{0,895}
\sum_{\substack{n\leqslant x\\ P^+(n-1)<P^+(n)>P^+(n+1)\\ x^{0,895}<P^+(n)\leqslant x}}1> 7\times 10^{-4}x.
\end{align}
De mani\`{e}re similaire, on peut montrer que
\begin{align}\label{0,858}
\sum_{\substack{n\leqslant x\\ P^+(n-1)<P^+(n)>P^+(n+1)\\ x^{0,858}<P^+(n)\leqslant x^{0,895}}}1> 1,44\times 10^{-4}x
\end{align}
et
\begin{align}\label{0,835}
\sum_{\substack{n\leqslant x\\ P^+(n-1)<P^+(n)>P^+(n+1)\\ x^{0,835}<P^+(n)\leqslant x^{0,858}}}1> 4\times 10^{-5}x.
\end{align}
En combinant les minorantions \eqref{0,895}, \eqref{0,858} et \eqref{0,835}, on obtient finalement l'estimation
\begin{align*}
\sum_{\substack{n\leqslant x\\ P^+(n-1)<P^+(n)>P^+(n+1)\\ x^{0,835}<P^+(n)\leqslant x}}1> 8,84\times 10^{-4}x.
\end{align*}
Ce qui implique le r\'{e}sultat.

\vskip 8mm

\section{D\'{e}monstration du Th\'{e}or\`{e}me \ref{thm3}}

Sans perte de la g\'{e}n\'{e}ralit\'{e},
on peut supposer que $j_0=0,$ c'est-à-dire,
$$
P^+(n)=\min_{0\leqslant j\leqslant J-1} P^+(n+j).
$$
Soient $y := x^{\alpha}$ avec
$$
0<\alpha < \frac{1}{2(J-1)}\cdot
$$
La méthode est analogue à la preuve de \eqref{eq:thm1_A}. On considère les entiers $y-$friables $n$ tels que
$P^+(n+j)>y$:
\begin{align*}
\sum_{\substack{n\leqslant x\\ P^+(n)=\min\limits_{0\leqslant j\leqslant J-1}P^+(n+j)}} 1
& \geqslant \sum_{\substack{n\in S(x,\, y)\\\noalign{\vskip 0.5mm} (n+j, \, P(x, y))>1
\\\noalign{\vskip 0.5mm} \forall\, 1\leqslant j\leqslant J-1}} 1
\\\noalign{\vskip -1mm}
& \geqslant \sum_{n\in S(x,\, y)}\frac{\omega(n+1;\, x, y)}{(\frac{\log x}{\log y})}
\cdots \frac{\omega(n+J-1;\, x, y)}{(\frac{\log x}{\log y})}
\\\noalign{\vskip 1mm}
& \geqslant {\alpha}^{J-1}
\mathop{\sum_{y<p_1\le Y} \cdots \sum_{y<p_{J-1}\le Y}}_{p_i\not= p_j\;(i\not=j)}
\sum_{\substack{n\in S(x,\, y)\\ n\equiv -j\,(\text{mod}\, p_j) \\ j=1, 2, \, \ldots \, , J-1}} 1,
\end{align*}
o\`{u} $Y := (x^{1/2}/(\log x)^B)^{1/(J-1)}$.

D'après  le th\'{e}or\`{e}me des restes chinois,
il existe un entier $a$ tel que les congruences de la somme intérieur du membre de droite s'écrivent sous la forme  $n\equiv a (\textrm{mod}\ p_1\cdots p_{J-1})$
car $p_1, \ldots, p_{J-1}$ sont premiers entre eux.
On a donc
\begin{equation}\label{thm2:S3S4}
\begin{aligned}
\sum_{\substack{n\leqslant x\\ P^+(n)=\min\limits_{0\leqslant j\leqslant J-1}P^+(n+j)}}1
& \geqslant\, {\alpha}^{J-1} \mathop{\sum_{y<p_1\le Y} \cdots
\sum_{y<p_{J-1}\le Y}}_{p_i\not= p_j\;(i\not=j)}
\sum_{\substack{n\in S(x,\, y)\\ n\equiv a (\textrm{mod}\, p_1\cdots p_{J-1})}}1
\\\noalign{\vskip 3mm}
& = {\alpha}^{J-1} (\mathcal{S}_3+\mathcal{S}_4),
\end{aligned}
\end{equation}
o\`{u}
\begin{align*}
\mathcal{S}_3
& := \mathop{\sum_{y<p_1\le Y} \cdots \sum_{y<p_{J-1}\le Y}}_{p_i\not= p_j\;(i\not=j)}
\frac{\Psi_{p_1\cdots p_{J-1}}(x,\, y)}{\varphi(p_1\cdots p_{J-1})},
\\
\mathcal{S}_4
& := \mathop{\sum_{y<p_1\le Y} \cdots \sum_{y<p_{J-1}\le Y}}_{p_i\not= p_j\;(i\not=j)}
\bigg(\Psi(x, y;\, a, p_1\cdots p_{J-1})- \frac{\Psi_{p_1\cdots p_{J-1}}(x, y)}
{\varphi(p_1\cdots p_{J-1})}\bigg).
\end{align*}

La somme $\mathcal{S}_4$ se majore de la même fa\c{c}on que $\mathcal{S}_2$,
en utilisant l'in\'{e}galit\'{e} de Cauchy-Schwarz et le Lemme \ref{lem:BV-S(x,y)}:
\begin{align}\label{thm2:S4}
\mathcal{S}_4 \ll x(\log x)^{-A}.
\end{align}

Pour $\mathcal{S}_3$, la condition $p_i\neq p_j\, (i\neq j)$  peut être supprimée,
car la contribution des puissances de nombres premiers est néglisable
(puisque tous les $p_j> y$). On a donc
\begin{equation}\label{thm2:S3}
\begin{aligned}
\mathcal{S}_3
& = x \rho\bigg(\frac{1}{\alpha}\bigg) \mathop{\sum_{y_1<p_1\leqslant Y_1}\frac{1}{p_1}\cdots \sum_{y_1<p_{J-1}\leqslant Y_1}} \frac{1}{p_{J-1}} +o(x)
\\\noalign{\vskip 1mm}
& = x \rho\bigg(\frac{1}{\alpha}\bigg) \bigg(\log\frac{1}{2\alpha(J-1)}\bigg)^{J-1}  +o(x).
\end{aligned}
\end{equation}

On peut donc d\'{e}duire que,  par \eqref{thm2:S3S4} \eqref{thm2:S4} et \eqref{thm2:S3}
\begin{align*}
\sum_{\substack{n\leqslant x\\ P^+(n)=\min\limits_{0\leqslant j\leqslant J-1}P^+(n+j)}} 1
& \geqslant \{f(J, \alpha) x + o(1)\} x
\end{align*}
où
$$
f(J, \alpha) := \rho(\alpha^{-1}) (-\alpha\log(2\alpha(J-1)))^{J-1}.
$$
La fonction $\alpha\mapsto f(J, \alpha)$ est continue et strictement positive sur $]0, \frac{1}{2(J-1)}[$ et
\begin{align*}
\lim_{\alpha\rightarrow 0^+} f(J, \alpha)=0,
\qquad
\lim_{\alpha\rightarrow (2(J-1))^{-1}-} f(J, \alpha)=0.
\end{align*}
Il existe donc un $\alpha_0\in\,]0, \frac{1}{2(J-1)}[$ tel que
$$
C_3(J) :=\max_{0<\alpha < \frac{1}{2(J-1)}} f(J, \alpha) = f(J, \alpha_0)>0.
$$
Ce qui ach\`{e}ve la d\'{e}monstration du Th\'{e}or\`{e}me \ref{thm3}.

\vskip 8mm

\section{D\'{e}monstration du Th\'{e}or\`{e}me \ref{thm4}}

Comme \eqref{eq:thm1_A}  du Th\'{e}or\`{e}me \ref{thm1} et le Th\'{e}or\`{e}me \ref{thm3},
la d\'{e}monstration du Th\'{e}or\`{e}me \ref{thm4} suit le même fil que
celle de \eqref{eq:thm1_B} du Th\'{e}or\`{e}me \ref{thm1}.

Pour $J\in \mathbb{Z}$, $J\geqslant 3$, on peut supposer,
sans perte de g\'{e}n\'{e}ralit\'{e}, que $j_0=0$, i.e.
$$
P^+(n)=\max_{0\leqslant j\leqslant J-1}P^+(n+j).
$$
Soient $\alpha, \beta, \gamma$ tels que
\begin{equation}\label{cond:alphabetagamma}
\frac{2J-2}{2J-1}<\alpha<1
\qquad\text{et}\qquad
\quad 1-\alpha\leqslant \beta< \gamma<\frac{\alpha}{2(J-1)}
\end{equation}
trois paramètres à choisir plus tard.
\'Etant donnés un entier $m$ et $J-1$ nombres premiers distincts $p_1, \dots, p_{J-1}$ vérifiant
\begin{equation}\label{cond:p1p2pJ-1}
1\le m\le x^{1-\alpha}
\qquad\text{et}\qquad
x^{\beta}<p_1,\, p_2,\, \dots,\, p_{J-1}\leqslant x^{\gamma},
\end{equation}
on considère le système d'équations de congruences :
\begin{align}\label{p1p2pJ-1}
\begin{split}
mp+1 & \equiv0\, (\text{mod}\, p_1),
\\\noalign{\vskip 0mm}
mp+2 & \equiv0\, (\text{mod}\, p_2),
\\\noalign{\vskip 0mm}
& \hskip 2,5mm\vdots
\\\noalign{\vskip 0mm}
mp+J-1 & \equiv0\, (\text{mod}\, p_{J-1}).
\end{split}
\end{align}
En remarquant que la condition \eqref{cond:alphabetagamma} garantit que $p_j\nmid m$
pour $1\le j\le J-1$, le th\'{e}or\`{e}me chinois et \eqref{p1p2pJ-1},
il existe $b<p_1p_2\cdots p_{J-1}$ tel que \eqref{p1p2pJ-1} soit équivalent à
\begin{align*}
p\equiv b\, (\text{mod}\, p_1p_2\cdots p_{J-1}).
\end{align*}
Pour ces $m$ et $p>x^{\alpha}$, il est facile de voir que
$$
P^+(mp)=\max\limits_{0\leqslant j\leqslant J-1}P^+(mp+j).
$$
On a donc
\begin{align*}
\sum_{\substack{n\leqslant x\\ P^+(n)=\max\limits_{0\leqslant j\leqslant J-1}P^+(n+j)}}1
& \geqslant \sum_{m\leqslant x^{1-\alpha}}\, \sum_{x^{\alpha}<p\leqslant x/m}\,
\prod_{1\le j\le J-1}
\frac{\omega(mp+j; \, x^{\gamma}, x^{\beta})}{(\frac{\log x}{\log x^{\beta}})}
\\
& = \beta^{J-1}\sum_{m\leqslant x^{1-\alpha}}
\mathop{\sum\;\cdots\;\sum}_{\substack{x^{\beta}<p_1,\, \dots,\, p_{J-1}\leqslant x^{\gamma}\\
\text{$p_1,\, \dots,\, p_{J-1}$ sont distincts}}} \;
\sum_{\substack{x^{\alpha}<p\leqslant x/m\\ p\equiv b\, (\text{mod}\, p_1p_2\cdots p_{J-1})}} 1
\end{align*}
Comme pour \eqref{thm2:S3}, la condition que $p_1,\, \dots,\, p_{J-1}$ soient distincts peut être supprimée puisque tous les $p_j> x^{\beta}$. En remarquant que $p_1\cdots p_{J-1}\le x^{\alpha/2}$,
on peut appliquer le théorème de Bombieri-Vinogradov (voir le Lemme \ref{lem:PanDingWang} ci-dessus)
\begin{align}
\sum_{\substack{n\leqslant x\\ P^+(n)=\max\limits_{0\leqslant j\leqslant J-1}P^+(n+j)}}1
& \geqslant \beta^{J-1}\sum_{m\leqslant x^{1-\alpha}}
\mathop{\sum\;\cdots\;\sum}_{x^{\beta}<p_1,\, \dots,\, p_{J-1}\leqslant x^{\gamma}}
\frac{\pi(x/m)-\pi(x^{\alpha})}{\varphi(p_1p_2\cdots p_{J-1})}+O\Big(\frac{x}{(\log x)^A}\Big)
\nonumber\\\noalign{\vskip 0mm}
& = x \bigg(\beta\log\frac{\gamma}{\beta}\bigg)^{J-1}\log\frac{1}{\alpha}+o(x).
\end{align}
Il est clair que la fonction continue
$(\alpha, \beta, \gamma)\mapsto \big(\beta\log\frac{\gamma}{\beta}\big)^{J-1}\log\frac{1}{\alpha}$
est strictement positive et on peut prendre
$$
C_4(J)
:= \max_{(\alpha, \beta, \gamma)\;\text{vérifient \eqref{cond:alphabetagamma}}}
\bigg(\beta\log\frac{\gamma}{\beta}\bigg)^{J-1}\log\frac{1}{\alpha}>0,
$$
ce qui ach\`{e}ve la d\'{e}monstration du Th\'{e}or\`{e}me \ref{thm4}.

\vskip 8mm

\section{D\'{e}monstration du Th\'{e}or\`{e}me \ref{thm2}(i)}

Soient $y=x^{\alpha}$ et $z=x^{\beta}$ avec $0<\beta<\alpha\leqslant \frac{1}{2}$.
Comme précédemment, on peut écrire
\begin{equation}\label{thm31:S1S2}
\begin{aligned}
\sum_{\substack{n\leqslant x\\ P^+_y(n)<P^+_y(n+1)}} 1
& \geqslant \sum_{\substack{n\in S(x;\, y, z)\\ \noalign{\vskip 1mm} (n+1,\ P(y, z))>1}}1
\\\noalign{\vskip 0mm}
& \geqslant \sum_{n\in S(x;\, y, z)}\frac{\omega(n+1;\, y, z)}{(\tfrac{\log x}{\log z})}
\\\noalign{\vskip 0mm}
& =\beta \sum_{z<p\leqslant y} \sum_{\substack{n\in S(x;\, y, z)\\n\equiv -1\, (\textrm{mod}\, p)}}1
\\\noalign{\vskip 0mm}
& = \beta (S_1+S_2),
\end{aligned}
\end{equation}
o\`{u}
\begin{align*}
S_1
& := \sum_{z<p\leqslant y} \frac{1}{\varphi(p)}\sum_{\substack{n\in S(x;\, y, z)\\(n,\, p)=1}}1
\\
S_2
& := \sum_{z<p\leqslant y}
\bigg(\sum_{\substack{n\in S(x;\, y, z)\\n\equiv -1\, (\textrm{mod}\, p)}} 1
- \frac{1}{\varphi(p)}\sum_{\substack{n\in S(x;\, y, z)\\(n,\, p)=1}} 1
\bigg).
\end{align*}
Pour $S_1$, en appliquant le Lemme \ref{lem:S(x;y,z)} avec $u=1/\alpha$ et $\lambda=\beta/\alpha$,
on obtient pour $\alpha+\beta \leqslant 1$
\begin{equation}\label{thm31:S1}
\begin{aligned}
S_1
& = \sum_{z<p\leqslant y} \frac{1}{\varphi(p)}\sum_{n\in S(x;\, y, z)}1
\\\noalign{\vskip 0mm}
& = \sum_{z<p\leqslant y} \frac{1}{p-1} \cdot x \vartheta_0\bigg(\frac{\beta}{\alpha}, \frac{1}{\alpha}\bigg)
\bigg\{1+O\bigg(\frac{1}{\log z}\bigg)\bigg\}
\\\noalign{\vskip 1mm}
&= x \vartheta_0\bigg(\frac{\beta}{\alpha}, \frac{1}{\alpha}\bigg) \log\bigg(\frac{\alpha}{\beta}\bigg)+o(x).
\end{aligned}
\end{equation}

Pour $S_2$, si $y\leqslant x^{1/2}/(\log x)^B$, o\`{u} $B=B(A)$ est une grande constante, on a en appliquant la Proposition \ref{prop:S(x;y,z)}
\begin{align}\label{thm31:S2*}
S_2 \ll \frac{x}{(\log x)^A}.
\end{align}
De plus, si $x^{1/2}/(\log x)^B< y\leqslant x^{1/2}$, une estimation triviale permet d'obtenir
\begin{equation}\label{thm31:S2**}
\begin{aligned}
S_2 & \ll \sum_{x^{1/2}/(\log x)^B< p \leqslant x^{1/2}}
\bigg|
\sum_{\substack{n\in S(x;\, y, z)\\ n\equiv -1\, (\textrm{mod}\, p)}} 1
- \frac{1}{\varphi(p)} \sum_{\substack{n\in S(x;\, y, z)\\(n,\, p)=1}}1 \bigg|
+ \frac{x}{(\log x)^A}
\\\noalign{\vskip 1mm}
&\ll x\sum_{x^{1/2}/(\log x)^B< p \leqslant x^{1/2}}\frac{1}{p} + \frac{x}{(\log x)^A}
\\\noalign{\vskip 1mm}
&\ll \frac{x\log_2x}{\log x}\cdot
\end{aligned}
\end{equation}

En reportant \eqref{thm31:S1}, \eqref{thm31:S2*} et \eqref{thm31:S2**} dans \eqref{thm31:S1S2},
on obtient pour $0<\alpha\leqslant \frac{1}{2}$
\begin{align}\label{thm31}
\sum_{\substack{n\leqslant x\\ P^+_y(n)<P^+_y(n+1)}}1
& \geqslant C(\alpha)x,
\end{align}
o\`{u} $C(\alpha)$ est d\'{e}finie par
\begin{align}\label{def:C(alpha)(i)}
C(\alpha)
:= \max_{0<\beta<\alpha}
\vartheta_0\bigg(\frac{\beta}{\alpha}, \frac{1}{\alpha}\bigg)\beta\log\bigg(\frac{\alpha}{\beta}\bigg)>0 \qquad(0<\alpha\leqslant 1/2)
\end{align}
et $\vartheta_0$ est d\'{e}finie par \eqref{def:vartheta0}.
Ce qui ach\`{e}ve la d\'{e}monstration du Th\'{e}or\`{e}me \ref{thm2}(i).

\vskip 8mm

\section{D\'{e}monstration du Th\'{e}or\`{e}me \ref{thm2}(ii)}

\subsection{Point de départ}\

\vskip 1mm

Pour $y=x^{\alpha}$ avec $\frac{1}{2}<\alpha\leqslant 1$ et $1-\alpha\le c\le \frac{1}{2}$, on peut écrire
\begin{equation}\label{SSASBSC}
\begin{aligned}
\sum_{\substack{n\leqslant x\\ P^+_y(n)<P^+_y(n+1)}} 1
& = \sum_{\substack{n\leqslant x\\ y\geqslant P^+(n+1)>x^{1-c}}} 1
- \sum_{\substack{n\leqslant x\\ y\geqslant P^+(n)>P^+(n+1)>x^{1-c}}} 1
+ \sum_{\substack{n\leqslant x\\ P^+_y(n)<P^+_y(n+1)\leqslant x^{1-c}}} 1
\\\noalign{\vskip 2mm}
& =: \mathscr{S}_A-\mathscr{S}_B+\mathscr{S}_C.
\end{aligned}
\end{equation}
Il reste \`{a} maintenant minorer $\mathscr{S}_A$, $\mathscr{S}_C$ et majorer $\mathscr{S}_B.$

\subsection{Estimation de la somme $\mathscr{S}_A$}\

\vskip 1mm

Pour $\mathscr{S}_A$, d'apr\`{e}s le Lemme \ref{lem2.3}, on a
\begin{equation}\label{SA}
\begin{aligned}
\mathscr{S}_A
& = x\bigg\{\rho\bigg(\frac{\log x}{\log y}\bigg)-\rho\bigg(\frac{\log x}{\log x^{1-c}}\bigg)\bigg\}+o(x)
\\\noalign{\vskip 1mm}
& = x\bigg\{\rho\bigg(\frac{1}{\alpha}\bigg)-\rho\bigg(\frac{1}{1-c}\bigg)\bigg\}+o(x)
\\\noalign{\vskip 1mm}
& = x\log\bigg(\frac{\alpha}{1-c}\bigg)+o(x)
\end{aligned}
\end{equation}
pour $\frac{1}{2}\leqslant 1-c\le \alpha \leqslant 1$,
o\`{u} on a utilis\'{e} le fait que $\rho(u)=1-\log u \; (1\leqslant u\leqslant 2)$.

\subsection{Estimation de la somme $\mathscr{S}_B$}\

\vskip 1mm

Pour $\mathscr{S}_B$, on utilise le crible de Rosser-Iwaniec comme l'auteur l'a fait dans \cite{Wang17}. On observe tout d'abord que, $\mathscr{S}_B$ compte le nombre des entiers $n$ tels que $n=ap=bp'-1,$ avec $x^{1-c}<p'<p\leqslant y.$
On a ainsi
\begin{align*}
\mathscr{S}_B
& \leqslant \big|\big\{n\leqslant x:\ n=ap=bp'-1,\ x^{1-\alpha}<a<b\leqslant x^c \big\}\big|+o(x)
\\\noalign{\vskip 1mm}
& \leqslant \sum_{x^{1-\alpha}<b\leqslant x^c} \big|\big\{n\in\mathscr{A}(b) : n\ \text{est premier}\big\}\big| +o(x),
\end{align*}
o\`{u}
$$
\mathscr{A}(b)
:= \bigg\{\frac{ap+1}{b}:\, ap\leqslant x,\, x^{1-\alpha}<a< b,\, ap+1\equiv0\,(\textrm{mod}\, b) \bigg\}.
$$
On prend $\mathcal{P} = \{p:\ p \ \text{est premier}\}$,
et on d\'{e}duit que, pour $b\leqslant x^c$ et $d\mid P(z):=\prod_{p<z} p$,
o\`{u} $z$ sera explicit\'{e} plus tard,
\begin{equation}\label{Ad(b)}
\begin{aligned}
|\mathscr{A}_d(b)|
& =\bigg|\bigg\{\frac{ap+1}{b}:\, ap\leqslant x,\ x^{1-\alpha}<a<b, ap+1\equiv 0 \,(\bmod\, bd) \bigg\}\bigg|
\\\noalign{\vskip 1mm}
& = \sum_{\substack{x^{1-\alpha}<a<b\\ (a,\, bd)=1}}\ \frac{\text{li}\, (x/a)}{\varphi(bd)}
+ \sum_{\substack{x^{1-\alpha}<a<b\\ (a,\, bd)=1}} E(x/a;\, -\overline{a}, bd),
\end{aligned}
\end{equation}
o\`{u}
\begin{align*}
E(x;\, a, q) := \pi(x;\, a, q)-\frac{\text{li}\, x}{\varphi(q)},
\qquad
a\overline{a}\equiv 1\,({\rm mod}\,bd).
\end{align*}

Pour la premi\`{e}re somme du membre de droit de \eqref{Ad(b)}, en utilisant le th\'{e}or\`{e}me des nombres premiers et
la formule d'inversion de M\"{o}bius, on a
\begin{align*}
\sum_{\substack{x^{1-\alpha}<a<b\\ (a,\, bd)=1}}\ \frac{\text{li}\, (x/a)}{\varphi(bd)}
& = \frac{1}{\varphi(bd)}\sum_{\substack{x^{1-\alpha}<a<b\\ (a,\, bd)=1}}\ \frac{x}{a\log(x/a)}\bigg\{1+O\bigg(\frac{1}{\log x}\bigg)\bigg\}
\nonumber\\\noalign{\vskip 1mm}
& = \big\{1+o(1)\big\} \frac{x}{\varphi(bd)}
\sum_{x^{1-\alpha}<a< b} \frac{1}{a\log(x/a)} \sum_{q\mid (a,\, bd)} \mu(q)
\nonumber\\\noalign{\vskip 1mm}
& = \big\{1+o(1)\big\} \frac{x}{\varphi(bd)}
\sum_{q\mid bd}\frac{\mu(q)}{q}\sum_{x^{1-\alpha}<aq< b}\frac{1}{a\log(x/aq)}\cdot
\end{align*}
Par une int\'{e}gration par parties, on obtient
\begin{equation}\label{Ad(b):main}
\begin{aligned}
\sum_{\substack{x^{1-\alpha}<a<b\\ (a,\, bd)=1}}\ \frac{\text{li}\, (x/a)}{\varphi(bd)}
& = \big\{1+o(1)\big\} \frac{x}{\varphi(bd)} \sum_{q\mid bd} \frac{\mu(q)}{q}
\bigg\{\int^{b/q}_{x^{1-\alpha}/q} \frac{{\rm{d}}t}{t\log(x/tq)}+o(1)\bigg\}
\\\noalign{\vskip -1mm}
& = \bigg\{\log\bigg(\frac{\alpha\log x}{\log(x/b)}\bigg) + o(1)\bigg\} \frac{x}{\varphi(bd)}
\sum_{q\mid bd} \frac{\mu(q)}{q}
\\\noalign{\vskip 1mm}
& = \bigg\{\log\bigg(\frac{\alpha\log x}{\log(x/b)}\bigg) + o(1)\bigg\}\, \frac{x}{bd}\cdot
\end{aligned}
\end{equation}

Compte tenu de \eqref{Ad(b)} et \eqref{Ad(b):main}, on en d\'{e}duit  que
$$
|\mathscr{A}_d(b)| = \frac{w(d)}{d} X + r\big(\mathscr{A}(b),d \big)
$$
avec
$$
w(d)=1,
\quad
X = \frac{x}{b}\, \bigg\{\log\bigg(\frac{\alpha\log x}{\log(x/b)}\bigg) + o(1)\bigg\},
\quad
r\big(\mathscr{A}(b),d \big)
= \sum_{\substack{x^{1-\alpha}<a<b\\ (a,\, bd)=1}} E(x/a;\, -\overline{a}, bd).
$$
On peut donc appliquer le Lemme \ref{lem:sieve} avec $D = z^2 = x^{4/7-\varepsilon}/b$
et obtenir
$$
S(\mathscr{A}(b); \mathcal{P}, z)
\leqslant \{1+o(1)\} \frac{2X}{\log(x^{4/7-\varepsilon}/b)}
+ \sum_{\ell<L}\, \sum_{d<D, \, d\mid P(z)}\lambda_{\ell}^+(d)\, r\big(\mathscr{A}(b),d \big),
$$
o\`{u} $L:= \exp(8/\varepsilon^3)$ et on a utilis\'{e} la formule de Mertens
$$
V(z)
= \prod_{p<z}\bigg(1-\frac{1}{p}\bigg)
= \frac{2\mathrm{e}^{-\gamma}}{\log(x^{4/7-\varepsilon}/b)}\big\{1+o(1)\big\}.
$$
Compte tenu de l'\'{e}valuation
$$
\big|\{n\in \mathscr{A}(b):\, n\ \text{est premier} \}\big|\leqslant S(\mathscr{A}(b); \mathcal{P}, z)+z,
$$
on obtient
\begin{equation}\label{SBSB1SB2}
\begin{aligned}
\mathscr{S}_B
& \leqslant \sum_{x^{1-\alpha}<b\leqslant x^c} \big( S(\mathscr{A}(b); \mathcal{P}, z)+z\big)
\\\noalign{\vskip 1mm}
& \leqslant \big\{1+o(1)\big\} \mathscr{S}_{B1} + \mathscr{S}_{B2} + O\big(x(\log x)^{-1}\big),
\end{aligned}
\end{equation}
o\`{u}
\begin{align*}
\mathscr{S}_{B1}
& := \sum_{x^{1-\alpha}<b\leqslant x^c} \frac{2x}{b\log(x^{4/7-\varepsilon}/b)}
\log\bigg(\frac{\alpha\log x}{\log(x/b)}\bigg),
\\\noalign{\vskip 1mm}
\mathscr{S}_{B2}
& := \sum_{\ell<L}\, \sum_{x^{1-\alpha}<b\leqslant x^c}\,
\sum_{\substack{d<D \\ d\mid P(z)}}\lambda_{\ell}^+(d)\, r\big(\mathscr{A}(b),d \big).
\end{align*}

Nous allons majorer $\mathscr{S}_{B2}$ à l'aide de la Proposition \ref{prop:4/7}.
Pour cela, écrivons d'abord,
\begin{align*}
\mathscr{S}_{B2}
& = \sum_{\ell<L} \sum_{\mathcal{B}} \sum_{\mathcal{B}/2<b\le \mathcal{B}}
\sum_{\substack{d<D \\ d\mid P(z)}}\lambda_{\ell}^+(d)\, \bigg\{ \sum_{\substack{x^{1-\alpha}<a<b\\ (a,\, bd)=1}}
\Big( \pi(x;\, a, -1, bd) -\frac{{\rm{li}}\,(x/a)}{\varphi(bd)} \Big)\bigg\}.
\end{align*}
De plus, on impose une condition  sur $c$

$$
c\leqslant 2/7-\varepsilon
$$
telle que
$$
x^{c}<D = x^{4/7-\varepsilon}/b.
$$

D\'{e}signons la fonction $\lambda_{\ell}$ par
\begin{align}\label{def:lambda}
\lambda_{\ell}(q)
:= \mathop{\sum_{\mathcal{B}/2 <b\leqslant \mathcal{B}}\, \sum_{d<D, \, d\mid P(z)}}_{bd=q}
\mathbb{1}_{]\mathcal{B}/2,\, \mathcal{B}]}(b)\, \lambda_{\ell}^+(d),
\end{align}
o\`{u}
\begin{align*}
\mathbb{1}_{]\mathcal{B}/2,\, \mathcal{B}]}(b)=\left\{
\begin{array}{ll}
    1 & \quad  \textmd{si} \ \ \mathcal{B}/2 <b\leqslant \mathcal{B},
    \\\noalign{\vskip 1mm}
    0 &  \quad  \textmd{sinon}.
  \end{array}
\right.
\end{align*}
Pour utiliser la Proposition 2,
il faut d\'{e}montrer que $\lambda_{\ell}(q)$ est bien factorisable de niveau $Q=\mathcal{B}D.$
Pour toute d\'{e}composition $x^{4/7-\varepsilon} = \mathcal{B}D=Q_1Q_2,$ $Q_1, Q_2\geqslant 1$,
compte tenu de $\mathcal{B}\leqslant D,$ il existe un $i\in \{1,  2\}$
tel que $Q_i\le x^{2/7-\varepsilon}\le D$.
Sans perte de la g\'{e}n\'{e}ralit\'{e}, on peut supposer  que $Q_1\leqslant D$.
Puisque $\lambda_{\ell}^+$ est une fonction bien factorisable de niveau $D$,
il existe deux fonctions arithm\'{e}tiques
$\lambda_1^+$ de niveau $Q_1$ et $\lambda_2^+$ de niveau $D/Q_1$ telles que
$$
\lambda_{\ell}^+=\lambda_1^+ \ast \lambda_2^+.
$$
Ainsi, on peut trouver deux fonctions arithm\'{e}tiques $\lambda_1^+$ de niveau $Q_1$ et
$\lambda_2^+\ast \mathbb{1}_{]\mathcal{B}/2,\, \mathcal{B}]}$ de niveau $Q_2=(D/Q_1)\cdot \mathcal{B}$ telles que
$$
\lambda
=\lambda_1^+ \ast \big(\lambda_2^+\ast \mathbb{1}_{]\mathcal{B}/2,\, \mathcal{B}]} \big).
$$
Cela montre que $\lambda$ est une fonction bien factorisable
de niveau $\mathcal{B}D = x^{4/7-\varepsilon}$.
En appliquant la Proposition \ref{prop:4/7} avec $\lambda(q)$ d\'{e}finie par \eqref{def:lambda} et
$(L_1, L_2, a, \ell )=(x^{1-\alpha}, b, -1, a)$, on obtient
$$
\sum_{\mathcal{B}/2 <b\leqslant \mathcal{B}}\,
\sum_{\substack{d<D \\ d\mid P(z)}}\lambda_{\ell}^+(d) \,
\bigg\{ \sum_{\substack{x^{1-\alpha}<a<b\\ (a,\, bd)=1}}
\Big( \pi(x;\, a, -1, bd) -\frac{{\rm{li}}\,(x/a)}{\varphi(q)} \Big)\bigg\}
\ll \frac{x}{(\log x)^2}\cdot
$$
D'où
\begin{equation}\label{SB2}
\mathscr{S}_{B2}\ll \frac{x}{\log x}\cdot
\end{equation}

Pour $\mathscr{S}_{B1}$, en utilisant une int\'{e}gration par parties on a
\begin{equation}\label{SB1}
\begin{aligned}
\mathscr{S}_{B1}
& = 2x \{1+o(1)\}
\int_{x^{1-\alpha}}^{x^c}
\log\bigg(\frac{\alpha\log x}{\log(x/t)}\bigg) \frac{\text{d} t}{t\log(x^{4/7-\varepsilon}/t)}
\\\noalign{\vskip 2mm}
& = x\bigg\{2\int_{1-\alpha}^{c} \log\bigg(\frac{\alpha}{1-t}\bigg)\frac{{\rm{d}}t}{\frac{4}{7}-t}
+ o_{c,\, \varepsilon, \,  \alpha}(1)\bigg\}.
\end{aligned}
\end{equation}

En reportant \eqref{SB2} et \eqref{SB1} dans \eqref{SBSB1SB2},  on arrive à l'inégalité
\begin{align}\label{SB4/7}
\mathscr{S}_B
\leqslant x\bigg\{2\int_{1-\alpha}^{c}\log\bigg(\frac{\alpha}{1-t}\bigg)\frac{{\rm{d}}t}{\frac{4}{7}-t}
+o_{c,\, \varepsilon, \,  \alpha}(1)\bigg\}
\end{align}
avec
\begin{align*}
0<c \leqslant \tfrac{2}{7}-\varepsilon, \quad 1-c\le \alpha \le 1.
\end{align*}

De mani\`{e}re similaire,
en utilisant le Lemme \ref{lem:PanDingWang} à la place de la Proposition \ref{prop:4/7},
on peut montrer
\begin{align}\label{SB1/2}
\mathscr{S}_B
\leqslant x\bigg\{2\int_{1-\alpha}^{c}\log\bigg(\frac{\alpha}{1-t}\bigg)\frac{{\rm{d}}t}{\frac{1}{2}-t}
+o_{c,\, \varepsilon, \,  \alpha}(1)\bigg\}
\end{align}
avec
\begin{align*}
\tfrac{2}{7}-\varepsilon< c < \tfrac{1}{2},
\qquad
1-c\le \alpha \le 1.
\end{align*}

\subsection{Estimation de la somme $\mathscr{S}_C$}\

\vskip 1mm

En vue de traiter $\mathscr{S}_C$, on a tout d'abord pour $B$ assez grand
\begin{align*}
\mathscr{S}_C :=\sum_{\substack{n\leqslant x\\ P^+_y(n)<P^+_y(n+1)\leqslant x^{1-c}}}1
& \geqslant \sum_{\substack{n\in S(x;\, y,\, x^{\delta})\\ (n+1,\, P(x^{1-c},\, x^{\delta}))>1}}1
\nonumber\\ \noalign{\vskip 3mm}
& \geqslant  \sum_{\substack{n\in S(x;\, x^{\alpha},\, x^{\delta})\\
(n+1,\, P(x^{1/2}/(\log x)^B,\, x^{\delta}))>1}} 1
\end{align*}
o\`{u} $\frac{1}{2}\leqslant 1-c<\alpha \leqslant 1$, $\delta$ est un param\`{e}tre satisfaisant $c\leqslant \delta\leqslant \frac{1}{2}$.
On rappelle la d\'{e}finition de $\omega(n+1;\, y,\, y_1)$ dans \eqref{def:omega} et obtient
$$
\omega\big(n+1;\, x^{1/2}/(\log x)^B,\, x^{\delta}\big)
\leqslant \frac{\log x}{\log x^{\delta}}
= \frac{1}{\delta},
$$
d'o\`{u}
\begin{align}\label{omega}
\delta\, \omega\bigg(n+1;\, x^{1/2}/(\log x)^B,\, x^{\delta}\bigg)
\left\{
\begin{array}{ll}
    \leqslant 1 &  \textmd{si} \ \ \big(n+1,\, \prod\limits_{x^{\delta}<p\leqslant x^{1/2}/(\log x)^B}p\big)>1,
    \\
    =0 &   \textmd{sinon} .
  \end{array}
\right.
\end{align}
On a donc
\begin{equation}\label{SCSC1SC2}
\begin{aligned}
\mathscr{S}_C
& \geqslant
\delta \sum_{n\in S(x;\, x^{\alpha},\, x^{\delta})} \omega\big(n+1;\, x^{1/2}/(\log x)^B,\, x^{\delta}\big)
\\
& = \delta \sum_{n\in S(x;\, x^{\alpha},\, x^{\delta})}
\sum_{\substack{p\mid n+1\\ x^{\delta}<p\leqslant x^{1/2}/(\log x)^B}} 1
\\
& =\delta \sum_{x^{\delta}<p\leqslant x^{1/2}/(\log x)^B}
\sum_{\substack{n\in S(x;\, x^{\alpha},\, x^{\delta})\\n\equiv -1 (\textrm{mod}\, p)}}1
\\
& = \delta (\mathscr{S}_{C1}+\mathscr{S}_{C2}),
\end{aligned}
\end{equation}
avec
\begin{align*}
\mathscr{S}_{C1}
& := \sum_{x^{\delta}<p\leqslant x^{1/2}/(\log x)^B}
\frac{1}{\varphi(p)}\sum_{\substack{n\in S(x;\, x^{\alpha},\, x^{\delta})\\(n,\, p)=1}} 1,
\\
\mathscr{S}_{C2}
& := \sum_{x^{\delta}<p\leqslant x^{1/2}/(\log x)^B}
\bigg(\sum_{\substack{n\in S(x;\, x^{\alpha},\, x^{\delta})\\n\equiv -1\, (\textrm{mod}\, p)}}1
-\frac{1}{\varphi(p)}\sum_{\substack{n\in S(x;\, x^{\alpha},\, x^{\delta})\\(n,\, p)=1}}1 \bigg).
\end{align*}

Pour \'{e}valuer $\mathscr{S}_{C2}$, on d\'{e}duit de la Proposition \ref{prop:S(x;y,z)} que
\begin{equation}\label{SC2}
\begin{aligned}
\mathscr{S}_{C2}
& \ll \sum_{q\leqslant x^{1/2}/(\log x)^B}
\bigg|\sum_{\substack{n\in S(x;\, x^{\alpha},\, x^{\delta})\\n\equiv -1\, (\textrm{mod}\, q)}}1
-\frac{1}{\varphi(q)}\sum_{\substack{n\in S(x;\, x^{\alpha},\, x^{\delta})\\(n,\, q)=1}}1\bigg|
\\
& \ll x(\log x)^{-A}
\end{aligned}
\end{equation}
pour tout $A>0$.

Il reste \`{a} \'{e}valuer $\mathscr{S}_{C1}$. On utilise le Lemme \ref{lem:S(x;y,z)} sur les entiers
sans facteur premier dans un intervalle donné.
\begin{equation}\label{SC1}
\begin{aligned}
\mathscr{S}_{C1}
&=  \sum_{x^{\delta}<p\leqslant x^{1/2}/(\log x)^B}
\frac{1}{p-1}\cdot \Psi_0(x;\, x^{\alpha},\, x^{\delta})
\\\noalign{\vskip 3mm}
&= x \vartheta_0\bigg(\frac{\delta}{\alpha},\, \frac{1}{\alpha}\bigg) \log\frac{1}{2\delta}+o(x).\end{aligned}
\end{equation}

D'apr\`{e}s \eqref{SCSC1SC2}, \eqref{SC2} et \eqref{SC1},
on a ainsi pour $c\leqslant\delta\leqslant1/2$
\begin{align}\label{SC}
\qquad \mathscr{S}_{C}
\geqslant x\vartheta_0\bigg(\frac{\delta}{\alpha},\, \frac{1}{\alpha}\bigg) \delta\log\frac{1}{2\delta}+o(x).
\end{align}

En reportant \eqref{SA}, \eqref{SB4/7}, \eqref{SB1/2} et \eqref{SC} dans \eqref{SSASBSC},
on obtient finalement pour $\frac{1}{2}<\alpha \leqslant 1$
\begin{align*}
\sum_{\substack{n\leqslant x\\ P^+_y(n)<P^+_y(n+1)}}1 \geqslant g(\alpha; c, \delta)x+o(x)
\end{align*}
o\`{u} $g(\alpha; c, \delta)$ est définie par
\begin{equation}\label{def:galphacdelta}
g(\alpha; c, \delta)
:= \log\bigg(\frac{\alpha}{1-c}\bigg)
- 2 \int_{1-\alpha}^{c} \log\bigg(\frac{\alpha}{1-t}\bigg) \frac{{\rm{d}}t}{\nu(c)-t}
+ \vartheta_0\bigg(\frac{\delta}{\alpha}, \frac{1}{\alpha}\bigg) \delta\log\frac{1}{2\delta}
\end{equation}
et
\begin{equation}\label{def:nuc}
\nu(c) :=
\begin{cases}
\tfrac{4}{7} & \text{si $\;0<c\leqslant \tfrac{2}{7}-\varepsilon$},
\\\noalign{\vskip 1mm}
\frac{1}{2}  &  \text{si $\;\tfrac{2}{7}-\varepsilon <c<\tfrac{1}{2}$}.
\end{cases}
\end{equation}

Il est clair que $g(\alpha; c, \delta)>0$ pour $1/2<\alpha\leqslant 1$.
On peut v\'{e}rifier de la m\^{e}me manière que lors de la preuve du Th\'{e}or\`{e}me 3, qu'il existe des $c_0$, $\delta_0$ satisfaisant les conditions ci-dessus
telles que pour $\frac{1}{2}<\alpha\leqslant 1$
\begin{align}\label{def:C(alpha)(ii)}
C(\alpha)
:= \max_{\frac{1}{2}\leqslant 1-c<\alpha}\, \max_{c\leqslant\delta\leqslant\frac{1}{2}}
g(\alpha; c, \delta)=g(\alpha; c_0, \delta_0) >0 \qquad (1/2<\alpha\leqslant 1),
\end{align}
ce qui ach\`{e}ve la d\'{e}monstration du Th\'{e}or\`{e}me \ref{thm2}(ii).

On a pu prendre ici le niveau $\nu(c)=4/7$ avec $c\leqslant 2/7-\varepsilon$ dans la somme $\mathscr{S}_B$
de \eqref{SSASBSC} et on peut donner une minoration de $\mathscr{S}_C$, tandis que \cite{Wang17} le niveau était $1/2$ et le terme $\mathscr{S}_C$  minoré par 0. C'est pourquoi on peut améliorer le résultat de \cite{Wang17} et obtenir le Corollaire \ref{cor1} suivant. Ces deux changements sont à l'origine de l'amélioration du résultat de \cite{Wang17}, donnée dans le Corollaire \ref{cor1}.

\vskip 8mm

\section{D\'{e}monstration du Corollaire \ref{cor1}}

En prenant $\alpha=1$ (c'est-à-dire, $y=x$) dans le Th\'{e}or\`{e}me \ref{thm2}(ii)
et en remarquant que $\omega(u)=0$ pour $0\leqslant u\leqslant 1$, on obtient
\begin{align}\label{aim:thm2}
\sum_{\substack{n\leqslant x\\ P^+(n)<P^+(n+1)}} 1
& \ge \{h(c) + f(\delta) + o(1)\}x,
\end{align}
o\`{u}
\begin{align*}
h(c)
& := \log\bigg(\frac{1}{1-c}\bigg)- 2\int_{0}^{c}\log\bigg(\frac{1}{1-t}\bigg) \frac{{\rm{d}}t}{\frac{4}{7}-t}
\qquad
(0< c\leqslant \tfrac{2}{7}-\varepsilon),
\\\noalign{\vskip 1mm}
f(\delta)
& := \rho\big(\delta^{-1}\big)\delta\log(2\delta)^{-1}
\qquad
(c\leqslant \delta\leqslant \tfrac{1}{2}).
\end{align*}

\vskip 2mm

Pour $h(c)$, on  fait un calcul similaire à \cite{Wang17}. On a par \emph{Mathematica 9.0}
\begin{align}\label{h(c)}
\max_{0< c\leqslant \frac{2}{7}-\varepsilon }\, h(c)>0,1238
\end{align}
avec $c\approx 0,2056$.

Ainsi, il reste \`{a} trouver $\delta_0\in [0,2056,\, 0,5[$ tel que $f(\delta_0)$ soit proche de la valeur maximale de $f(\delta)$. Au premier abord, on peut calculer facilement que
$$
f'(\tfrac{1}{2})<0
\qquad \text{et}\qquad
f'(\delta)>0
\quad (\delta\leqslant \tfrac{1}{3}).
$$
$\delta_0$ est ainsi dans l'intervalle $(1/3,\, 1/2)$ et on a
\begin{align*}
f(\delta)
= \bigg(1-\log\frac{1}{\delta}+\int_{1}^{1/\delta-1}\frac{\log t}{t+1}\text{d}t\bigg)\delta\log\frac{1}{2\delta}
\qquad (\tfrac{1}{3}\leqslant\delta\leqslant \tfrac{1}{2}).
\end{align*}
Le graphe de $f(\delta)$ sur $[1/3,\, 1/2]$ est le suivant
\begin{figure}[H]
\centerline{
\psfig{figure=./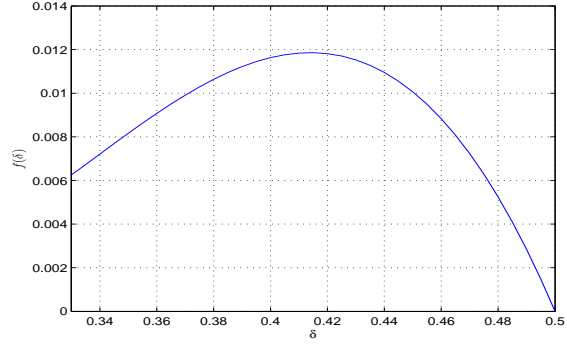,height=5cm,width=8.3cm,angle=0}
}
\caption{$\text{graphe de}\ \delta\rightarrow f(\delta)$ \text{sur} [1/3,\, 1/2]}
\label{fig:f(delta)}
\end{figure}

\`{A} l'aide de \emph{Mathematica 9.0} on peut calculer que
\begin{align}\label{f(delta0)}
\max_{0,2056\leqslant \delta\leqslant 0,5}f(\delta)=f(\delta_0)>0,0118
\end{align}
avec $\delta_0\approx 0,414$.

Compte tenu de \eqref{aim:thm2}, \eqref{h(c)} et \eqref{f(delta0)} on obtient finalement pour $x\rightarrow\infty$
\begin{align*}
\sum_{\substack{n\leqslant x\\P^+(n)<P^+(n+1)}}1  > 0,1238x + 0,0118x=0,1356x.
\end{align*}
Cela ach\`{e}ve la d\'{e}monstration du Corollaire 1.

\vskip 8mm

\section{Démonstration du Corollaire \ref{cor2}}

En remarquant que
$$
P^+(n-1)>P^+(n)<P^+(n+1)
\;\Rightarrow\;
\delta(n)\geqslant 2,
$$
le Th\'{e}or\`{e}me 1 nous permet de déduire que
\begin{align*}
\sum_{n\leqslant x} \delta(n)^{-1}
& = \sum_{\substack{n\leqslant x\\ ``P^+(n-1)>P^+(n)<P^+(n+1)" \text{n'a pas lieu}}} \delta(n)^{-1}
+ \sum_{\substack{n\leqslant x\\ P^+(n-1)>P^+(n)<P^+(n+1)}} \delta(n)^{-1}
\\
& \le \sum_{\substack{n\leqslant x\\ ``P^+(n-1)>P^+(n)<P^+(n+1)" \text{n'a pas lieu}}} 1
+ \sum_{\substack{n\leqslant x\\ P^+(n-1)>P^+(n)<P^+(n+1)}} \frac{1}{2}
\\
& = x - \sum_{\substack{n\leqslant x\\ P^+(n-1)>P^+(n)<P^+(n+1)}} \frac{1}{2}
\\\noalign{\vskip 1mm}
& \le (1-5,315\times 10^{-8})x .
\end{align*}

On peut obtenir une meilleur majoration en utilisant le Théorème \ref{thm3}. Pour $j\geqslant 1,$
on note $C_j$ l'ensemble des {\it creux} d'ordre $j$:
\begin{align*}
C_j:=C_j(x)=\big\{n\leqslant x:\ P^+(n)<P^+(n+k)\ \text{pour tous}\ 1\leqslant |k|\leqslant j \big\},
\end{align*}
et pour $C_0,$ on prend $C_0=(C_1)^c,$ le complémentaire de $C_1.$ Alors on a pour tout entier $J\geqslant 1$
\begin{align*}
\sum_{n\leqslant x} \delta(n)^{-1}
& = \sum_{n\in C_0} \delta(n)^{-1}+ \sum_{n\in C_1} \delta(n)^{-1}
\\
& = \sum_{n\in C_0} \delta(n)^{-1}+ \sum_{n\in C_1\backslash C_2} \delta(n)^{-1}+\sum_{n\in C_2} \delta(n)^{-1}
\\
& = \sum_{n\in C_0} \delta(n)^{-1}+ \sum_{j=2}^{J}\ \sum_{n\in C_{j-1}\backslash C_j} \delta(n)^{-1}
+\sum_{n\in C_J} \delta(n)^{-1}
\\
& \leqslant \sum_{n\not\in C_1}1+ \sum_{j=2}^{J}\, \frac{1}{j}\big(|C_{j-1}|-|C_{j}|\big)+\frac{C_{J}}{J+1}
\\
& \leqslant x- \sum_{j=1}^{J}|C_{j}| \Big(\frac{1}{j}-\frac{1}{j+1}\Big).
\end{align*}
Puis, on applique la minoration de $|C_{j}|$, $j\geqslant 2$ du Théorème \ref{thm3} et
la minoration de $|C_{1}|$ du Théorème \ref{thm1}, on obtient
\begin{align}\label{cor2-thm3}
\sum_{n\leqslant x} \delta(n)^{-1}
\leqslant x\Big\{1-5,315\times 10^{-8}- \sum_{j=2}^{J}C_3(j) \Big(\frac{1}{j}-\frac{1}{j+1}\Big)\Big\},
\end{align}
où $C_3(j)$ est une constante strictement positive  définie par \eqref{C3(J)}.
On peut donc obtenir une meilleure majoration pour $\delta(n)$.
Mais le gain est minime, on ne calcule pas ici la valeur numérique.

La démonstration du Corollaire $2^*$ est analogue. De plus, en utilisant le Théorème \ref{thm4}, on peut aussi obtenir un meilleur résultat pour $\delta_*(n)$ comme \eqref{cor2-thm3} ci-dessus .
On omet ici les détails.

\vskip 8mm

\section{Démonstration du Corollaire \ref{cor3}}
On note pour $x\rightarrow \infty$
\begin{align*}
a_1(x):=\big|\big\{n\leqslant x:\, P^+(n-1)>P^+(n)<P^+(n+1)\big\}\big|,
\\\noalign{\vskip 1mm}
a_2(x):=\big|\big\{n\leqslant x:\, P^+(n-1)<P^+(n)>P^+(n+1)\big\}\big|,
\\\noalign{\vskip 1mm}
a_3(x):=\big|\big\{n\leqslant x:\, P^+(n-1)<P^+(n)<P^+(n+1)\big\}\big|,
\\\noalign{\vskip 1mm}
a_4(x):=\big|\big\{n\leqslant x:\, P^+(n-1)>P^+(n)>P^+(n+1)\big\}\big|,
\end{align*}
o\`{u} $0\leqslant a_1(x), a_2(x), a_3(x), a_4(x) \leqslant [x],\ a_1(x)+ a_2(x)+ a_3(x)+ a_4(x)=[x].$
\vskip 1mm

On a tout d'abord, d'apr\`{e}s \eqref{eq:cor1} du Corollaire \ref{cor1}
\begin{align*}
[x]-\big(a_2(x)+a_3(x)\big)=a_1(x)+a_4(x)>0,1356x,
\nonumber\\\noalign{\vskip 2mm}
[x]-\big(a_2(x)+a_4(x)\big)=a_1(x)+a_3(x)>0,1356x.
\end{align*}
La minoration de $a_2(x)$ dans \eqref{eq:thm1_B} du Théorème \ref{thm1} permet d'obtenir
\begin{align*}
a_3(x)< (0,8644-8,84\times 10^{-4})x,
\qquad
a_4(x)< (0,8644-8,84\times 10^{-4})x.
\end{align*}
Pour $a_1(x)$, on d\'{e}duit, en utilisant la minoration de \eqref{mino-delta(n)}
\begin{equation}\label{majo-a1}
\begin{aligned}
\frac{2}{3}x+o(x)
& <\sum_{n\leqslant x}\frac{1}{\delta(n)}
\\\noalign{\vskip 1mm}
& \leqslant a_2(x)+a_3(x)+a_4(x)+\frac{a_1(x)}{2}
\\\noalign{\vskip 1mm}
& \leqslant x-a_1(x)+\frac{a_1(x)}{2}.
\end{aligned}
\end{equation}
Par suite, \eqref{majo-a1} entra\^{i}nent que
\begin{align*}
a_1(x) <\frac{2}{3}x.
\end{align*}
Finalement, similaire \`{a} \eqref{majo-a1}, on peut obtenir
\begin{align*}
a_2(x)< \frac{2}{3}x
\end{align*}
en utilisant la minoration de \eqref{mino-delta*(n)}
\begin{align*}
\sum_{n\leqslant x}\frac{1}{\delta_*(n)} > \frac{2}{3}x +o(x).
\end{align*}
Ce qui termine la démonstration du Corollaire 3.

\vskip 7mm

\bibliographystyle{plain}
\bibliography{SurLesPlusGrandsFacteursPremierDesEntiersConsecutifs}

\end{document}